\theoremstyle{plain}
\newtheorem{thm}{Theorem}[section]
\newtheorem{prop}[thm]{Proposition}
\newtheorem{lem}[thm]{Lemma}
\theoremstyle{definition}
\newtheorem{rmk}[thm]{Remark}
\newcommand{\Hom}{\mathrm{Hom}}
\newcommand{\gr}{\mathrm{gr}}
\newcommand{\Ker}{\mathrm{Ker}}
\newcommand{\prjt}{\mathrm{pr}}
\newcommand{\Fil}{\mathrm{Fil}}
\newcommand{\Spf}{\mathrm{Spf}}
\newcommand{\Spv}{\mathrm{Sp}}
\newcommand{\Spec}{\mathrm{Spec}}
\newcommand{\Frac}{\mathrm{Frac}}
\newcommand{\Hdg}{\mathrm{Hdg}}
\newcommand{\et}{\mathrm{et}}
\newcommand{\sep}{\mathrm{sep}}
\newcommand{\Kbar}{\bar{K}}
\newcommand{\okbar}{\mathcal{O}_{\bar{K}}}
\newcommand{\Acrys}{A_{\mathrm{crys}}}
\newcommand{\Bcrys}{B_{\mathrm{crys}}}
\newcommand{\okey}{\mathcal{O}_K}
\newcommand{\oel}{\mathcal{O}_L}
\newcommand{\cC}{\mathcal{C}}
\newcommand{\cD}{\mathcal{D}}
\newcommand{\cG}{\mathcal{G}}
\newcommand{\cH}{\mathcal{H}}
\newcommand{\cK}{\mathcal{K}}
\newcommand{\cM}{\mathcal{M}}
\newcommand{\cN}{\mathcal{N}}
\newcommand{\cO}{\mathcal{O}}
\newcommand{\Tcrys}{T_{\mathrm{crys}}^{*}}
\newcommand{\TSG}{T^{*}_{\SG}}
\newcommand{\Vcrys}{V^{*}_{\mathrm{crys}}}
\newcommand{\okbari}{\mathcal{O}_{\Kbar,i}}
\newcommand{\PD}{\mathrm{DP}}
\newcommand{\CRYS}{\mathrm{CRYS}}
\newcommand{\SG}{\mathfrak{S}}
\newcommand{\SGm}{\mathfrak{M}}
\newcommand{\SGn}{\mathfrak{N}}
\newcommand{\SGl}{\mathfrak{L}}
\newcommand{\ModSGf}{\mathrm{Mod}_{/\mathfrak{S}_1}^{1,\varphi}}
\newcommand{\ModSGfinf}{\mathrm{Mod}_{/\mathfrak{S}_{\infty}}^{1,\varphi}}
\newcommand{\ModSGffr}{\mathrm{Mod}_{/\mathfrak{S}}^{1,\varphi}}
\newcommand{\ModSf}{\mathrm{Mod}_{/S_1}^{1,\varphi}}
\newcommand{\ModSfinf}{\mathrm{Mod}_{/S_\infty}^{1,\varphi}}
\newcommand{\ModSffr}{\mathrm{Mod}_{/S}^{1,\varphi}}
\newcommand{\upi}{\underline{\pi}}
\newcommand{\um}{\underline{m}}
\newcommand{\un}{\underline{n}}
\newcommand{\up}{\underline{p}}
\newcommand{\uY}{\underline{Y}}
\newcommand{\bD}{\mathbb{D}}
\newcommand{\bZ}{\mathbb{Z}}
\newcommand{\bQ}{\mathbb{Q}}
\newcommand{\sI}{\mathscr{I}}
\newcommand{\sJ}{\mathscr{J}}
\newcommand{\sS}{\mathscr{S}}
\newcommand{\frB}{\mathfrak{B}}
\newcommand{\frG}{\mathfrak{G}}
\newcommand{\frX}{\mathfrak{X}}
\renewcommand{\p@enumii}{}
\begin{document}

\title[On lower ramification subgroups]{On lower ramification subgroups and canonical subgroups}
\author{Shin Hattori}
\date{\today}
\email{shin-h@math.kyushu-u.ac.jp}
\address{Faculty of Mathematics, Kyushu University}
%\classification{11S23}
%\keywords{ramification, finite flat group scheme, canonical subgroup}
\thanks{Supported by JSPS KAKENHI Grant Number 23740025.}

\begin{abstract}
Let $p$ be a rational prime, $k$ be a perfect field of characteristic $p$ and $K$ be a finite totally ramified extension of the fraction field of the Witt ring of $k$. Let $\cG$ be a finite flat commutative group scheme over $\okey$ killed by some $p$-power. In this paper, we prove a description of ramification subgroups of $\cG$ via the Breuil-Kisin classification, generalizing the author's previous result on the case where $\cG$ is killed by $p\geq 3$. As an application, we also prove that the higher canonical subgroup of a level $n$ truncated Barsotti-Tate group $\cG$ over $\okey$ coincides with lower ramification subgroups of $\cG$ if the Hodge height of $\cG$ is less than $(p-1)/p^n$. 
\end{abstract}

\maketitle

\section{Introduction}\label{intro}

Let $p$ be a rational prime, $k$ be a perfect field of characteristic $p$ and $W=W(k)$ be the Witt ring of $k$. The natural Frobenius endomorphism of the ring $W$ lifting the $p$-th power Frobenius of $k$ is denoted by $\varphi$. Let $K$ be a finite extension of $K_0=\Frac(W)$ with integer ring $\okey$, uniformizer $\pi$ and absolute ramification index $e$. We fix an algebraic closure $\Kbar$ of $K$ and extend the valuation $v_p$ of $K$ satisfying $v_p(p)=1$ to $\Kbar$. Let $\hat{\cO}_{\Kbar}$ be the completion of the integer ring $\cO_{\Kbar}$. We also fix a system $\{\pi_n\}_{n\geq 0}$ of $p$-power roots of $\pi$ in $\Kbar$ satisfying $\pi_0=\pi$ and $\pi_{n+1}^p=\pi_n$ and put $K_\infty=\cup_n K(\pi_n)$. The absolute Galois groups of $K$ and $K_\infty$ are denoted by $G_K$ and $G_{K_\infty}$, respectively. For any positive rational number $i$, put
$m_K^{\geq i}=\{x\in \okey| v_p(x)\geq i\}$ and $\cO_{K,i}=\okey/m_K^{\geq i}$. For any valuation ring $V$ of height one, we define $m_V^{\geq i}$ and $V_i$ similarly. We also put $\sS_i=\Spec(\cO_{K,i})$, $\sS_{L,i}=\Spec(\cO_{L,i})$ for any finite extension $L/K$ and $\bar{\sS}_i=\Spec(\cO_{\Kbar,i})$.

Breuil conjectured a classification of finite flat (commutative) group schemes over $\okey$ killed by some $p$-power via $\varphi$-modules over the formal power series ring $\SG=W[[u]]$ and obtained such a classification for the case where groups are killed by $p\geq 3$ (\cite{Br_AZ}). It is often referred to as the Breuil-Kisin classification, since Kisin showed the conjecture for $p\geq 3$ (\cite{Ki_Fcrys}) and for the case where $p=2$ and groups are connected (\cite{Ki_2}). The conjecture was proved for any $p$ independently by Kim (\cite{Kim_2}), Lau (\cite{Lau_2}) and Liu (\cite{Li_2}). In particular, we have an exact category $\ModSGf$ of such $\varphi$-modules over $\SG_1=\SG/p\SG$ (for the definition, see Section \ref{BK}) and an anti-equivalence of exact categories $\SGm^*(-)$ from the category of finite flat group schemes over $\okey$ killed by $p$ to the category $\ModSGf$.

On the other hand, we also have an anti-equivalence $\cH(-)$ from the category $\ModSGf$ to an exact category of finite flat generically etale group schemes over $k[[u]]$ whose Verschiebung is zero (\cite[Th\'eor\`eme 7.4]{SGA3-7A}). Let $R$ be the valuation ring defined as the projective limit of $p$-th power maps
\[
R=\varprojlim (\cO_{\Kbar,1}\gets\cO_{\Kbar,1}\gets\cdots),
\]
which is considered as an $\SG$-algebra by the map $u\mapsto \upi=(\pi_0,\pi_1,\ldots)$. The ring $R$ admits a natural $G_K$-action. We normalize its valuation $v_R$ by $v_R(\upi)=1/e$. We also normalize the indices of the upper and the lower ramification subgroups of finite flat generically etale group schemes $\cG$ over $\okey$ and $\cH$ over $k[[u]]$ to be adapted to $v_p$ and $v_R$, respectively. In particular, we define the $i$-th lower ramification subgroups of $\cG$ and $\cH$ by
\[
\cG_i(\okbar)=\Ker(\cG(\okbar)\to \cG(\cO_{\Kbar,i})), \quad \cH_i(R)=\Ker(\cH(R)\to \cH(R_i)).
\]

By the classification theory mentioned above, we have an isomorphism of $G_{K_\infty}$-modules
\[
\varepsilon_\cG: \cG(\okbar)\to \cH(\SGm^*(\cG))(R).
\]
In this paper, we prove that this isomorphism respects the upper and the lower ramification subgroups of both sides, generalizing the main theorem of \cite{Ha_ramcorr} which showed the case of $p\geq 3$. 

\begin{thm}\label{ramcorr2}
Let $p$ be a rational prime and $K/\bQ_p$ be an extension of complete discrete valuation fields with perfect residue field $k$. Let $\cG$ be a finite flat group scheme over $\okey$ killed by $p$ and consider the associated object $\SGm^*(\cG)$ of the category $\ModSGf$. Then the map $\varepsilon_\cG: \cG(\okbar)\simeq \cH(\SGm^*(\cG))(R)$ induces the isomorphisms of $G_{K_\infty}$-modules
\[
\cG_i(\okbar)\simeq \cH(\SGm^*(\cG))_i(R),\quad \cG^j(\okbar)\simeq \cH(\SGm^*(\cG))^j(R)
\]
for any positive rational numbers $i$ and $j$.
\end{thm}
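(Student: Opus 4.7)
The plan is to adapt the strategy of \cite{Ha_ramcorr}, which established the theorem for $p\geq 3$, to the case $p=2$. Since the Breuil-Kisin classification is now known for all $p$ by the work of Kim, Lau, and Liu, the substance of the argument is to verify that the ramification-theoretic comparison of \cite{Ha_ramcorr} survives without the hypothesis $p\geq 3$.

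First, I would make the map $\varepsilon_\cG$ explicit. By functoriality of the classification, $\cG(\okbar)$ is identified with a set of $\SG_1$-linear, Frobenius-compatible morphisms from $\SGm^*(\cG)$ into a period module built from $R$, and $\cH(\SGm^*(\cG))(R)$ admits a parallel description in which the map $\varepsilon_\cG$ is induced by $u\mapsto\upi$. With this description in hand, reduction modulo $m_{\okbar}^{\geq i}$ and reduction modulo $m_R^{\geq i}$ intertwine, using the normalization $v_R(\upi)=1/e$ and the compatibility of the surjection $R\twoheadrightarrow\cO_{\Kbar,1}$ obtained from the first projection in the definition of $R$.

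Second, for the lower ramification filtration, a section $x\in\cG(\okbar)$ lies in $\cG_i(\okbar)$ precisely when the corresponding morphism in the Hom-description vanishes modulo the appropriate truncation, which is exactly the condition that its image under $\varepsilon_\cG$ lies in $\cH(\SGm^*(\cG))_i(R)$. For the upper ramification filtration, I would invoke the Abbes-Saito construction and use the fact that $\SGm^*(-)$ and $\cH(-)$ are anti-equivalences of exact categories to reduce, by dévissage on closed immersions of group schemes together with the Herbrand function, to the lower ramification comparison just established.

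The main obstacle for $p=2$ is that several divided-power estimates on $\Acrys$ used in \cite{Ha_ramcorr} degenerate, and these appear in the passage between $\cG(\okbar)$ and the Hom-description over $R$. I would circumvent this by working directly with the Kim--Lau--Liu realization of $\SGm^*(\cG)$, which expresses $\cG(\okbar)$ through étale $\varphi$-modules and bypasses divided powers altogether, and by re-deriving the period-ring compatibility through the universal property of $\cH$ rather than through a crystalline comparison. Once this identification is established uniformly in $p$, the two-step argument above goes through verbatim.
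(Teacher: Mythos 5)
Your proposal has a genuine gap at its central step, and the workaround you suggest for $p=2$ does not address it. The sentence ``a section $x\in\cG(\okbar)$ lies in $\cG_i(\okbar)$ precisely when the corresponding morphism in the Hom-description vanishes modulo the appropriate truncation, which is exactly the condition that its image under $\varepsilon_\cG$ lies in $\cH(\SGm^*(\cG))_i(R)$'' is not an observation one can make after ``making $\varepsilon_\cG$ explicit''; it \emph{is} the theorem. The isomorphism $\varepsilon_\cG$ is constructed through period rings and a fortiori only identifies $\cG(\okbar)$ with $\Hom_{\SG,\varphi}(\SGm^*(\cG),R)$ as abstract $G_{K_\infty}$-modules; there is no a priori compatibility between the scheme-theoretic reduction $\cG(\okbar)\to\cG(\cO_{\Kbar,i})$ and the reduction $R\to R_i$ on the target of the Hom-description. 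For $p\geq 3$ this compatibility was extracted in \cite{Ha_ramcorr} from Breuil's explicit presentation of the affine algebra of $\cG$ in terms of $\SGm^*(\cG)$ (\cite[Proposition 3.1.2]{Br}), which is precisely what is unavailable for $p=2$; the paper says so explicitly in the introduction. Your proposed remedy --- passing to the \'etale $\varphi$-module realization and ``bypassing divided powers altogether'' --- only repackages the generic-fiber Galois module and carries no information about the reduction maps $\cG(\okbar)\to\cG(\cO_{\Kbar,i})$, so the gap remains.

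What the paper actually does is different in kind: it proves the stronger Theorem \ref{lowramcorrn} ($\cG_i(\okbar)\simeq\Hom_{\SG,\varphi}(\SGm,I_{n,i})$ with $I_{n,i}=\Ker(W_n(R)\xrightarrow{\varphi}W_n^{\PD}(R)_i)$) by (a) taking a resolution of $\cG$ by Barsotti--Tate groups and using Kim's comparison $\mu_\Gamma$ together with Lemma \ref{lattice}, and (b) --- this is the key new input --- invoking the faithfulness of the crystalline Dieudonn\'e functor over the truncated rings $\cO_{L,i}$, which are complete intersections, via de Jong--Messing, and showing that the thickening $A_{n,i}\colon W_n^{\PD}(R)_i\to\cO_{\Kbar,i}$ is the final object of $\CRYS(\bar{\sS}_i/\Sigma_n)$ so that evaluation there is injective (Lemma \ref{modinj}). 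This is what converts ``$x$ dies in $\cG(\cO_{\Kbar,i})$'' into ``$\varepsilon_\cG(x)$ lands in $I_{n,i}$'', and the explicit computation $I_{1,i}=m_R^{\geq i}$ (Proposition \ref{Y}) then yields Theorem \ref{ramcorr2}. None of this machinery appears in your outline. Separately, your reduction of the upper-ramification statement to the lower one via ``d\'evissage and the Herbrand function'' is not the mechanism that works; the paper uses Cartier duality combined with the Tian--Fargues comparison between the upper filtration of $\cG$ and the lower filtration of $\cG^\vee$, together with its analogue over $k[[u]]$ from \cite{Ha_ramcorr}.
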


In fact, we prove a more general result which is valid for any $\cG$ killed by some $p$-power (Theorem \ref{lowramcorrn}). As an application of this general result, we also prove the following theorem.

\begin{thm}\label{canlow}
Let $K/\bQ_p$ be an extension of complete discrete valuation fields. Let $\cG$ be a truncated Barsotti-Tate group of level $n$, height $h$ and dimension $d$ over $\okey$ with $0<d<h$ and Hodge height $w<(p-1)/p^n$. Then the level $n$ canonical subgroup $\cC_n$ of $\cG$ (\cite[Theorem 1.1]{Ha_cansubZ}) satisfies the equalities $\cC_n=\cG_{i_n}=\cG_{i'_n}$ for 
\[
i_n=1/(p^{n-1}(p-1))-w/(p-1),\quad i'_n=1/(p^n(p-1)).
\]
\end{thm}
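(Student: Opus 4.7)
The plan is to apply Theorem \ref{lowramcorrn} to transfer the problem to the $k[[u]]$-side, where the Frobenius structure of $\SGm^*(\cG)$ makes the valuation computations tractable, and then to match the canonical subgroup with the lower ramification subgroups of $\cH(\SGm^*(\cG))$ at the indices $i_n$ and $i'_n$.

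First, Theorem \ref{lowramcorrn} yields a $G_{K_\infty}$-equivariant isomorphism $\varepsilon_\cG \colon \cG(\okbar) \simeq \cH(\SGm^*(\cG))(R)$ preserving lower ramification subgroups for every positive rational index. Via the contravariant functors $\SGm^*(-)$ and $\cH(-)$, the inclusion $\cC_n \hookrightarrow \cG$ transforms into a sub-$k[[u]]$-group-scheme $\cH_n := \cH(\SGm^*(\cC_n)) \hookrightarrow \cH(\SGm^*(\cG))$, and the naturality of $\varepsilon$ gives $\varepsilon_\cG(\cC_n(\okbar)) = \cH_n(R)$. Hence it suffices to prove on the $\cH$-side that
\[
\cH_n(R) = \cH(\SGm^*(\cG))_{i_n}(R) = \cH(\SGm^*(\cG))_{i'_n}(R).
\]

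Second, I would carry out valuation estimates using \cite[Th\'eor\`eme 7.4]{SGA3-7A}, which realizes $R$-points of $\cH(\SGm^*(\cG))$ as solutions in $R$ of a system of polynomial equations whose coefficients are controlled by the Frobenius matrix of $\SGm^*(\cG)$. The Hodge height hypothesis $w < (p-1)/p^n$ translates to an upper bound on the $u$-adic valuations of these coefficients. Iterating this bound together with the level $n$ iteration of Frobenius should yield the sharp estimates: every coordinate of a point in $\cH_n(R)$ has $v_R$-valuation at least $i_n$, while every point outside $\cH_n(R)$ possesses some coordinate of $v_R$-valuation strictly less than $i'_n$. Since $i_n - i'_n = 1/p^n - w/(p-1) > 0$ precisely when $w < (p-1)/p^n$, the chain of inclusions $\cH_n(R) \subseteq \cH(\SGm^*(\cG))_{i_n}(R) \subseteq \cH(\SGm^*(\cG))_{i'_n}(R) \subseteq \cH_n(R)$ forces equality throughout.

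The main obstacle is the sharp valuation estimate, particularly the iteration needed for $n \geq 2$. For $n=1$ I expect the argument to reduce to a combination of Theorem \ref{ramcorr2} with the standard characterization of the level one canonical subgroup of a truncated Barsotti-Tate group in terms of a valuation bound. For $n \geq 2$ the natural route is induction on $n$: apply the level one case to $\cG[p]$ to identify $\cC_1$, then use the fact that $\cG/\cC_1$ is again a truncated Barsotti-Tate group whose Hodge height satisfies the bound $(p-1)/p^{n-1}$ required at level $n-1$, so that its canonical subgroup of level $n-1$ is $\cC_n/\cC_1$. Matching the arithmetic of the bounds $i_n$ and $i'_n$ across this recursion, consistently with the normalizations $v_p$ on $\okbar$ and $v_R(\upi) = 1/e$ on $R$, is the delicate part of the argument.
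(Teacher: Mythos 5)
Your high-level skeleton (pass to the Kisin-module side, settle $n=1$ by a valuation computation on the Frobenius matrix, then induct on $n$ through the canonical-subgroup recursion) does match the paper's strategy, but there are two concrete gaps. First, the functor $\cH(-)$ of \cite[Th\'eor\`eme 7.4]{SGA3-7A} applies only to objects of $\ModSGf$, i.e.\ to groups killed by $p$; for $n\geq 2$ there is no group scheme $\cH(\SGm^*(\cG))$ over $k[[u]]$, so the chain of inclusions you write ``on the $\cH$-side'' is not defined at level $n$. What replaces it is Theorem \ref{lowramcorrn}, which identifies $\cG_i(\okbar)$ with $\Hom_{\SG,\varphi}(\SGm,I_{n,i})$ for an ideal $I_{n,i}\subseteq W_n(R)$; the explicit description of these ideals (Proposition \ref{Y}, Lemmas \ref{I} and \ref{ipi}) is a substantial computation with divided powers of Witt vectors that your outline does not touch, and it is indispensable for anything quantitative at level $n\geq 2$.

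Second, and more seriously, the containment $\cC_n\subseteq\cG_{i_n}$ for $n\geq 2$ is not obtained by ``iterating the Frobenius bound'': no direct valuation estimate on $p^n$-torsion points is attempted in the paper, and it is unclear how one would carry it out, since those points live in $W_n(R)$ rather than $R$. The paper instead proves $\cG_{i'_n}\subseteq\cC_n$ (Lemma \ref{inclu}, using that $\cC_1$ is the Frobenius kernel modulo $m_K^{\geq 1-w}$ together with the induction hypothesis applied to $p^{-(n-1)}\cC_1/\cC_1$ --- note that $\cG/\cC_1$ itself is \emph{not} a truncated Barsotti--Tate group, its order being $p^{nh-d}$, so your recursion object needs correcting), and then closes the loop by an order count: Proposition \ref{surj} shows that multiplication by $p$ maps $\cG_{i_n}(\okbar)$ onto a subgroup containing $\cC_{n-1}(\okbar)$, by explicitly lifting a $\varphi$-compatible map $\SGm\to I_{n-1,i_{n-1}}$ to one into $I_{n,i_n}$; this requires solving Artin--Schreier-type systems over $R$ and uses that $\Frac(R)$ is algebraically closed. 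Your proposal offers no mechanism for this surjectivity or for the sharp estimate it replaces; the sentence ``should yield the sharp estimates'' sits exactly where the real work lies.
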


Note that by our assumption and \cite[Theorem 1.1]{Ha_cansubZ}, we have an isomorphism of groups $\cC_n(\okbar)\simeq (\bZ/p^n\bZ)^d$. The fact that the lower ramification subgroup $\cG_{i_n}(\okbar)$ is isomorphic to $(\bZ/p^n\bZ)^d$ for $w<(p-1)/p^n$ was proved by Rabinoff (\cite[Theorem 1.9]{Ra}) for the case where $K/\bQ_p$ is an extension of (not necessarily discrete) complete valuation fields of height one, by a different method. Theorem \ref{canlow} reproves this result of Rabinoff for the case where the base field $K$ is a complete discrete valuation field, and also shows that the canonical subgroup constructed by Rabinoff coincides with $\cC_n$. In particular, we show that it has standard properties as in \cite[Theorem 1.1]{Ha_cansubZ}, such as the coincidence with the kernel of a Frobenius homomorphism.

Using Theorem \ref{canlow}, we also prove the following theorem on a family construction of canonical subgroups, which is stronger than \cite[Corollary 1.2]{Ha_cansubZ}.

\begin{thm}\label{familycan}
Let $K/\bQ_p$ be an extension of complete discrete valuation fields. Let $\frX$ be an admissible formal scheme over $\Spf(\okey)$ and $\frG$ be a truncated Barsotti-Tate group of level $n$ over $\frX$ of constant height $h$ and dimension $d$ with $0<d<h$. We let $X$ and $G$ denote the Raynaud generic fibers of the formal schemes $\frX$ and $\frG$, respectively. Put $r_n=(p-1)/p^n$ and let $X(r_n)$ be the admissible open subset of $X$ defined by 
\[
X(r_n)(\Kbar)=\{x\in X(\Kbar)\mid \Hdg(\frG_x)<r_n\}.
\]
Then there exists an admissible open subgroup $C_n$ of $G|_{X(r_n)}$ over $X(r_n)$ such that, etale locally on $X(r_n)$, the rigid-analytic group $C_n$ is isomorphic to the constant group $(\bZ/p^n\bZ)^d$ and, for any finite extension $L/K$ and $x\in X(L)$, the fiber $(C_n)_x$ coincides with the generic fiber of the level $n$ canonical subgroup of $\frG_x$.
\end{thm}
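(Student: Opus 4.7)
The crucial input is Theorem \ref{canlow}: for every truncated Barsotti--Tate group of Hodge height $w<(p-1)/p^n$ over the integer ring of a complete discrete valuation field, the canonical subgroup coincides with the lower ramification subgroup of the fixed index $i'_n=1/(p^n(p-1))$, which is \emph{independent of $w$}. This uniformity of $i'_n$ across all fibers over $X(r_n)$ is exactly what allows the fiber-wise canonical subgroups to be packaged into a single admissible open subspace via a valuation-theoretic condition on the Raynaud generic fiber, going beyond the construction of \cite[Corollary 1.2]{Ha_cansubZ}.

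To construct $C_n$, work Zariski locally on $\frX$ and take $\frX=\Spf(A)$ affine; write $\frG=\Spf(B)$ with $B$ finite flat over $A$, and let $\cJ=\Ker(B\to A)$ be the augmentation ideal of the identity section, with generators $f_1,\dots,f_m\in B$. Define $C_n\subset G|_{X(r_n)}$ as the admissible open subspace cut out by the rational conditions $|f_i(P)|^{p^n(p-1)}\leq |p|$ for all $i$. For any finite extension $L/K$ and any $P\in G(L)$ lying over $x\in X(r_n)(L)$, the finiteness of $\frG\to\frX$ produces a unique $\oel$-point $\tilde{P}$ of $\frG$ extending $P$, and the above condition is equivalent to $\tilde{P}$ agreeing with the identity section modulo $m_L^{\geq i'_n}$. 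This is precisely the defining condition of the $i'_n$-th lower ramification subgroup of $\frG_x$; hence the construction is independent of the choice of the generators $f_i$, the local pieces glue to a global admissible open $C_n\subset G|_{X(r_n)}$, and by Theorem \ref{canlow} the fiber of $C_n$ at every $x\in X(r_n)(L)$ agrees with the generic fiber of $\cC_n(\frG_x)$.

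That $C_n$ is stable under the group operations can be checked on geometric fibers, where it reduces to the fact that $\cC_n(\frG_x)$ is a subgroup of $\frG_x$. Since the Raynaud generic fiber of a finite flat group scheme is finite \'etale in characteristic zero, $G|_{X(r_n)}\to X(r_n)$ is finite \'etale of degree $p^{nh}$; as $C_n$ is a clopen admissible open subspace of constant fiber-degree $p^{nd}$, it is itself finite \'etale over $X(r_n)$ of degree $p^{nd}$. Any such rigid-analytic finite \'etale cover trivializes after a suitable \'etale base change, and the fiber-wise identification $\cC_n(\frG_x)(\okbar)\simeq (\bZ/p^n\bZ)^d$ furnished by \cite[Theorem 1.1]{Ha_cansubZ} forces the trivialization to be the constant group $(\bZ/p^n\bZ)^d$.

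The main technical hurdle is to make this sketch fully rigorous: one must verify that the valuation conditions really define an admissible open (rather than a mere set of closed points) of $G|_{X(r_n)}$, that its formation is compatible with base change along arbitrary points of $X(r_n)$, and that the local definitions glue consistently when $\frX$ is non-affine, so that in particular the ambiguity in the choice of local generators $f_i$ does not obstruct globalization. The lower ramification characterization reduces these to largely formal checks, but clean handling of the finite \'etale structure of $C_n$ and its descent under \'etale base change remains the most delicate point.
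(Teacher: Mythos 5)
Your proposal follows essentially the same route as the paper: exploit the fact that the index $i'_n=1/(p^n(p-1))$ in Theorem \ref{canlow} is independent of the Hodge height, define $C_n$ as the family of $i'_n$-th lower ramification subgroups via the valuation condition $v_p(\sI(x))\geq i'_n$ on the augmentation ideal, and then use \'etaleness to get local constancy. The paper packages the first part as Lemma \ref{familylow}, realizing the locus by an admissible formal blow-up of $\frG$ along $p^m\cO_\frG+\sI^n$ and obtaining the group structure from the universal property of dilatations, whereas you cut it out by rational conditions on affinoids and check group stability on classical points; both are fine, and your pointwise check of stability is legitimate since a morphism of rigid spaces factors through an admissible open as soon as it does so on classical points.

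The one place where your argument as written has a hole is the sentence ``as $C_n$ is a clopen admissible open subspace of constant fiber-degree $p^{nd}$, it is itself finite \'etale over $X(r_n)$.'' That $C_n$ is clopen (equivalently, finite) over $X(r_n)$ is not something you have established --- it is essentially the conclusion you are after, and an admissible open of a finite \'etale cover is not closed in general. What you do have from the construction is that the open immersion $C_n\to G|_{X(r_n)}$ is quasi-compact (rational subdomains, or the affinoid $\Spv(A\langle I^n/p^m\rangle)$ in the paper's version), hence $C_n\to X(r_n)$ is \'etale, quasi-compact and separated with constant fiber rank $p^{nd}$; finiteness then follows from a genuine theorem of Conrad (\cite[Theorem A.1.2]{Co}), which is exactly the citation the paper supplies at this point. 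With that reference inserted, the remaining steps (\'etale-local trivialization and the identification of fibers with canonical subgroups) go through as you describe.
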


In \cite{Ha_ramcorr}, the proof of Theorem \ref{ramcorr2} for $p\geq 3$ is reduced to showing a congruence of the defining equations of $\cG$ and $\cH(\SGm^*(\cG))$ with respect to the identification $k[[u]]/(u^e)\simeq \cO_{K,1}$ sending $u$ to $\pi$. This congruence is a consequence of an explicit description of the affine algebra of $\cG$ in terms of $\SGm^*(\cG)$ due to Breuil (\cite[Proposition 3.1.2]{Br}), which is known only for the case where $\cG$ is killed by $p\geq 3$. Here, instead, we study a relationship between the groups $\cG(\cO_{\Kbar,i})$ and $\cH(\SGm^*(\cG))(R_i)$ by using the faithfulness of the crystalline Dieudonn\'{e} functor (\cite{deJM}), from which Theorem \ref{ramcorr2} (and Theorem \ref{lowramcorrn}) follows easily.

%----------------------------------------------------------------------------------

%----------------------------------------------------------------------------------

\section{The Breuil-Kisin classification}\label{BK}

In this section, we briefly recall the classification of finite flat group schemes and Barsotti-Tate groups over $\okey$ due to Kim (\cite{Kim_2}). Consider the continuous $\varphi$-semilinear endomorphism of $\SG$ defined by $u\mapsto u^p$, which is denoted also by $\varphi$. Put $\SG_n=\SG/p^n\SG$. Let $E(u)\in W[u]$ be the (monic) Eisenstein polynomial of the uniformizer $\pi$. Then a Kisin module (of $E$-height $\leq 1$) is an $\SG$-module endowed with a $\varphi$-semilinear map $\varphi_\SGm:\SGm\to \SGm$, which we also write abusively as $\varphi$, such that the cokernel of the map 
\[
1\otimes \varphi:\varphi^*\SGm=\SG\otimes_{\varphi,\SG}\SGm\to \SGm
\]
is killed by $E(u)$. The Kisin modules form an exact category in an obvious manner, and its full subcategory consisting of $\SGm$ such that $\SGm$ is free of finite rank over $\SG$ ({\it resp.} free of finite rank over $\SG_1$ {\it resp.} finitely generated, $p$-power torsion and $u$-torsion free) is denoted by $\ModSGffr$ ({\it resp.} $\ModSGf$ {\it resp.} $\ModSGfinf$).

We also have categories of Breuil modules $\ModSffr$, $\ModSf$ and $\ModSfinf$ defined as follows (for more precise definitions, see for example \cite[Subsection 2.1]{Ha_ramcorr}, where the definitions are valid also for $p=2$). Let $S$ be the $p$-adic completion of the divided power envelope of $W[u]$ with respect to the ideal $(E(u))$ and put $S_n=S/p^n S$. The ring $S$ has a natural divided power ideal $\Fil^1S$, a continuous $\varphi$-semilinear endomorphism defined by $u\mapsto u^p$ which is also denoted by $\varphi$ and a differential operator $N:S\to S$ defined by $N(u)=-u$. We can also define a $\varphi$-semilinear map $\varphi_1=p^{-1}\varphi:\Fil^1S\to S$. Then a Breuil module (of Hodge-Tate weights in $[0,1]$) is an $S$-module endowed with an $S$-submodule $\Fil^1\cM$ containing $(\Fil^1S)\cM$ and a $\varphi$-semilinear map $\varphi_{1,\cM}:\Fil^1\cM\to \cM$ satisfying some conditions. We also define $\varphi_\cM: \cM\to \cM$ by $\varphi_\cM(x)=\varphi_1(E(u))^{-1}\varphi_{1,\cM}(E(u)x)$. We drop the subscript $\cM$ if there is no risk of confusion. The Breuil modules also form an exact category. Its full subcategory $\ModSffr$ ({\it resp.} $\ModSf$) is defined to be the one consisting of $\cM$ such that $\cM$ is free of finite rank over $S$ and $\cM/\Fil^1\cM$ is $p$-torsion free ({\it resp.} $\cM$ is free of finite rank over $S_1$). The category $\ModSfinf$ is defined as the smallest full subcategory containing $\ModSf$ and closed under extensions. Then the functor $\SGm\mapsto S\otimes_{\varphi,\SG}\SGm$ induces exact functors
\[
\ModSGffr \to \ModSffr,\quad \ModSGf \to \ModSf,\quad \ModSGfinf \to \ModSfinf
\]
which are all denoted by $\cM_\SG(-)$.

Put $\upi=(\pi_0,\pi_1,\ldots)\in R$ as before and consider the Witt ring $W(R)$ as an $\SG$-algebra by the map $u\mapsto [\upi]$. The $p$-adic period ring $\Acrys$ is defined as the $p$-adic completion of the divided power envelope of $W(R)$ with respect to the ideal $E(u)W(R)$ and the ring $\Acrys[1/p]$ is denoted by $\Bcrys^+$. For any $r=(r_0,r_1,\ldots)\in R$ with $r_l\in \cO_{\Kbar,1}$, choose a lift $\hat{r}_l$ of $r_l$ in $\okbar$ and put $r^{(m)}=\lim_{l\to\infty}\hat{r}_{l+m}^{p^l}\in \hat{\cO}_{\Kbar}$. Consider the surjection $\theta_n: W_n(R)\to \cO_{\Kbar,n}$ sending $(r_0,r_1,\ldots,r_{n-1})$ to $\sum_{l=0}^{n-1} p^l r_l^{(l)}$. Then the quotient $\Acrys/p^n\Acrys$ can be identified with the divided power envelope $W_n^\PD(R)$ of the surjection $\theta_n$ compatible with the canonical divided power structure on the ideal $pW_n(R)$. For any objects $\SGm\in \ModSGffr$ and $\cM\in \ModSffr$, we have the associated $G_{K_\infty}$-modules
\[
\TSG(\SGm)=\Hom_{\SG,\varphi}(\SGm,W(R)),\quad \Tcrys(\cM)=\Hom_{S,\varphi,\Fil^1}(\cM,\Acrys),
\]
which are related by the injection
\[
\TSG(\SGm)\to \Tcrys(\cM_\SG(\SGm))
\]
defined by $f\mapsto 1\otimes (\varphi\circ f)$. Similarly, for any object $\SGm\in \ModSGfinf$, we have the associated $G_{K_\infty}$-module
\[
\TSG(\SGm)=\Hom_{\SG,\varphi}(\SGm,\bQ_p/\bZ_p\otimes_{\bZ_p} W(R)).
\]

Let $D$ be an admissible filtered $\varphi$-module over $K$ such that $\gr^iD_K=0$ unless $i=0,1$. Put $S_{K_0}=S\otimes_W K_0$ and $\cD=S_{K_0}\otimes_{K_0} D$. The $S_{K_0}$-module $\cD$ is endowed with a natural Frobenius map $\varphi_\cD:\cD\to \cD$ induced by the Frobenius of $D$, a derivation $N_\cD=N\otimes 1: \cD\to \cD$ and an $S_{K_0}$-submodule $\Fil^1\cD$ defined as the inverse image of $\Fil^1D_K$ by the map $\cD\to \cD/(\Fil^1S) \cD=D_K$. Then a strongly divisible lattice in $\cD$ is an $S$-submodule $\cM$ of $\cD$ which satisfies the following:
\begin{itemize}
\item $\cM$ is a free $S$-module of finite rank and $\cD=\cM[1/p]$.
\item $\cM$ is stable under $\varphi_\cD$ and $N_\cD$.
\item $\varphi_\cD(\Fil^1\cM)\subseteq p\cM$, where $\Fil^1\cM=\cM\cap \Fil^1\cD$.
\end{itemize}
We put $\Vcrys(\cD)=\Hom_{S_{K_0},\varphi,\Fil^1}(\cD,\Bcrys^+)$. If $\cM$ is a strongly divisible lattice in $\cD$, then the natural $G_{K_\infty}$-actions on $\Tcrys(\cM)$ and $\Vcrys(\cD)=\Tcrys(\cM)[1/p]$ extend to $G_K$-actions and we have a natural isomorphism of $G_K$-modules
\[
\Vcrys(\cD)\to \Vcrys(D)=\Hom_{K_0,\varphi,\Fil}(D,\Bcrys^+)
\]
(\cite[Proposition 2.2.5]{Br_AZ} and \cite[Lemma 5.2.1]{Li_BC}).

Let $(\mathrm{BT}/\okey)$ ({\it resp.} $(p\text{-}\mathrm{Gr}/\okey)$) be the exact category of Barsotti-Tate groups ({\it resp.} finite flat group schemes killed by some $p$-power) over $\okey$. For any Barsotti-Tate group $\Gamma$ over $\okey$, we let $T_p(\Gamma)$ denote its $p$-adic Tate module, $V_p(\Gamma)=\bQ_p\otimes_{\bZ_p}T_p(\Gamma)$ and $D^*(\Gamma)$ be the filtered $\varphi$-module over $K$ associated to $V_p(\Gamma)$. We also let $\bD^*(-)$ denote the contravariant crystalline Dieudonn\'{e} functor (\cite{BBM}) and consider its module of sections 
\[
\bD^*(\Gamma)(S\to \okey)=\varprojlim_n \bD^*(\Gamma)(S_n\to \cO_{K,n})
\]
on the divided power thickening $S\to \okey$ defined by $u\mapsto \pi$. Note that the $S$-module $\bD^*(\Gamma)(S\to \okey)$ can be considered as an object of the category $\ModSffr$ and also as a strongly divisible lattice in $\cD^*(\Gamma)=S_{K_0}\otimes_{K_0}D^*(\Gamma)$ (\cite[Section 6]{Fal}). For any finite flat group scheme $\cG$ over $\okey$ killed by some $p$-power, we define an object $\bD^*(\cG)(S\to \okey)$ of the category $\ModSfinf$ similarly. Then we have the following classification theorem due to Kim, whose first assertion implies the second one by an argument of taking a resolution. 

\begin{thm}\label{KimBK}
\begin{enumerate}
\item\label{KimBK1} (\cite{Kim_2}, Theorem 4.1 and Proposition 4.2) There exists an anti-equivalence of exact categories
\[
\SGm^*(-):  (\mathrm{BT}/\okey) \to \ModSGffr
\]
with a natural isomorphism of $G_{K_\infty}$-modules 
\[
\varepsilon_\Gamma: T_p(\Gamma) \to \TSG(\SGm^*(\Gamma)).
\]
Moreover, the $S$-module $\cM_\SG(\SGm^*(\Gamma))$ can be considered as a strongly divisible lattice in $\cD^*(\Gamma)$ and we also have a natural isomorphism of strongly divisible lattices in $\cD^*(\Gamma)$
\[
\mu_\Gamma: \cM_\SG(\SGm^*(\Gamma))\to \bD^*(\Gamma)(S\to \okey).
\]

\item (\cite{Kim_2}, Corollary 4.3) There exists an anti-equivalence of exact categories
\[
\SGm^*(-):  (p\text{-}\mathrm{Gr}/\okey) \to \ModSGfinf
\]
with a natural isomorphism of $G_{K_\infty}$-modules 
\[
\varepsilon_\cG: \cG(\okbar) \to \TSG(\SGm^*(\cG)).
\]
Moreover, we also have a natural isomorphism of the category $\ModSfinf$
\[
\mu_\cG:\cM_\SG(\SGm^*(\cG))\to \bD^*(\cG)(S\to \okey).
\]
\end{enumerate}
\end{thm}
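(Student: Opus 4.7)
The plan is to follow Kim's strategy, which extends Kisin's classification from $p\geq 3$ to all primes. Part (1) is the main statement; part (2) is deduced from it by a resolution argument, so I focus on (1) first.

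My approach to (1) proceeds in three steps. First, attach to each Barsotti--Tate group $\Gamma$ over $\okey$ its crystalline Dieudonn\'e module $\bD^*(\Gamma)(S\to\okey)$, obtained by evaluating the contravariant Dieudonn\'e functor on the PD-thickening $S\to\okey$ defined by $u\mapsto\pi$. By Breuil's theory for $p\geq 3$ and its $p=2$ extensions, this construction gives an anti-equivalence between $(\mathrm{BT}/\okey)$ and the category of strongly divisible lattices in admissible filtered $\varphi$-modules with Hodge--Tate weights in $[0,1]$. Second, and crucially, descend from Breuil modules to Kisin modules: for every strongly divisible lattice $\cM$ so obtained, construct a unique $\varphi$-stable $\SG$-submodule $\SGm\subseteq \cM$, free of finite rank over $\SG$ and of $E$-height $\leq 1$, satisfying $S\otimes_{\varphi,\SG}\SGm\simeq \cM$. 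For $p\geq 3$ this is Kisin's theorem, proved by combining the theory of \'etale $\varphi$-modules over $\cO_\cE$ (which realizes $T_p(\Gamma)|_{G_{K_\infty}}$ by Fontaine's equivalence) with an analysis rigidifying the $E(u)$-height $\leq 1$ condition. For $p=2$ it requires Kim's modification, handling subtle $2$-torsion phenomena, for instance by separating the connected case (settled in \cite{Ki_2}) from the \'etale case and gluing. Third, check the compatibilities: $\mu_\Gamma$ is essentially built into the construction, since $\cM_\SG(\SGm^*(\Gamma))=\bD^*(\Gamma)(S\to\okey)$ by design; and $\varepsilon_\Gamma$ arises from the crystalline comparison theorem, which identifies $T_p(\Gamma)$ with $\Tcrys(\bD^*(\Gamma)(S\to\okey))$, together with the natural inclusion $\TSG(\SGm^*(\Gamma))\hookrightarrow \Tcrys(\cM_\SG(\SGm^*(\Gamma)))$ recalled in Section \ref{BK}.

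For part (2), any finite flat group scheme $\cG$ over $\okey$ killed by a power of $p$ admits a two-term resolution $0\to \Gamma_1\to \Gamma_2\to \cG\to 0$ with $\Gamma_1,\Gamma_2$ Barsotti--Tate groups. Applying the anti-equivalence of (1) yields a presentation $\SGm^*(\cG)=\Coker(\SGm^*(\Gamma_2)\to \SGm^*(\Gamma_1))$ in the category of Kisin modules, which places $\SGm^*(\cG)$ in $\ModSGfinf$. The isomorphisms $\varepsilon_\cG$ and $\mu_\cG$ are inherited from the Barsotti--Tate case by the exactness of the functors involved, with independence of the chosen resolution following from full faithfulness of $\SGm^*$ on Barsotti--Tate groups.

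The principal obstacle is the Kisin-module descent at $p=2$ (step two of (1)): Kisin's original argument relies on divisibility properties that degenerate when $p=2$, so controlling $2$-torsion in the $\SG$-lattice structure is delicate and forms the technical heart of Kim's contribution.
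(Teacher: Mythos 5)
This statement is not proved in the paper: it is imported wholesale from Kim's work (\cite{Kim_2}, Theorem 4.1, Proposition 4.2 and Corollary 4.3), and the only argument the paper itself supplies is the remark that the first assertion implies the second ``by an argument of taking a resolution.'' Your treatment of part (2) matches that remark exactly --- resolve $\cG$ by Barsotti--Tate groups, transport the resolution through the anti-equivalence of part (1), and check independence of the resolution via full faithfulness --- so that portion is fine and is essentially all the paper asks of this theorem.

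Your sketch of part (1), however, contains a genuine gap if read as a proof rather than as a pointer to the literature. Your first step takes as input an anti-equivalence between $(\mathrm{BT}/\okey)$ and strongly divisible lattices, attributed to ``Breuil's theory for $p\geq 3$ and its $p=2$ extensions,'' and then descends from Breuil modules to Kisin modules. At $p=2$ this is circular: Breuil's classification of Barsotti--Tate groups by strongly divisible $S$-modules is precisely what is \emph{not} available there (the divided powers on $\Fil^1 S$ fail to be topologically nilpotent when $p=2$, which is why \cite{Br_AZ} is restricted to $p\geq 3$), and the $p=2$ statement is obtained as a \emph{consequence} of the Kisin-module classification, not as an input to it. Kim's actual route runs in the opposite direction: starting from a Kisin module $\SGm$ of $E$-height $\leq 1$, he shows that $S\otimes_{\varphi,\SG}\SGm$ underlies a Dieudonn\'e crystal and invokes the crystalline Dieudonn\'e anti-equivalence (together with Zink--Lau-type inputs at $p=2$) to manufacture the Barsotti--Tate group; the isomorphism $\mu_\Gamma$ and the compatibility of $\varepsilon_\Gamma$ with $\Tcrys$ then come out of that construction, exactly as the paper uses them later in Lemma \ref{lattice}. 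Your description of the $p=2$ difficulty as ``separating the connected case from the \'etale case and gluing'' is closer to Kisin's and Liu's strategies than to Kim's. Since the paper treats the theorem as a black box, none of this affects the rest of the argument, but your step one would need to be replaced by (or reduced to) one of the cited constructions to stand on its own.
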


On the other hand, for any object $\SGm$ of the category $\ModSGffr$ or $\ModSGfinf$, we can define a dual object $\SGm^\vee$ which is compatible with Cartier duality of Barsotti-Tate groups or finite flat group schemes. In particular, for any object $\SGm$ of the category $\ModSGfinf$ killed by $p^n$, we have a commutative diagram of $G_{K_\infty}$-modules
\[
\xymatrix{
\cG(\okbar)\times \cG^\vee(\okbar)\ar[r]\ar@<-6ex>[d]^{\wr}_{\varepsilon_\cG}\ar@<+5ex>[d]^{\wr}_{\delta_\cG} & \bZ/p^n\bZ(1) \ar[d]\\
\TSG(\SGm^*(\cG))\times\TSG(\SGm^*(\cG)^\vee)\ar[r]& W_n(R),
}
\]
where the upper horizontal arrow is the pairing of Cartier duality, the lower horizontal arrow is a natural perfect pairing, $\delta_\cG$ is the composite
\[
\cG^\vee(\okbar)\overset{\varepsilon_{\cG^\vee}}{\simeq} \TSG(\SGm^*(\cG^\vee))\simeq \TSG(\SGm^*(\cG)^\vee)
\]
and the right vertical arrow is an injection (see \cite[Subsection 5.1]{Kim_2}, and also \cite[Proposition 4.4]{Ha_ramcorr}).

Let $\Gamma$ be a Barsotti-Tate group over $\okey$. We consider any element $g$ of $T_p(\Gamma)$ as a homomorphism $g:\bQ_p/\bZ_p\to \Gamma\times\Spec(\hat{\cO}_{\Kbar})$. By evaluating the map $\bD^*(g):\bD^*(\Gamma\times\Spec(\hat{\cO}_{\Kbar}))\to \bD^*(\bQ_p/\bZ_p)$ on the natural divided power thickening $\Acrys\to \hat{\cO}_{\Kbar}$, we obtain a homomorphism of $G_{K_\infty}$-modules
\begin{align*}
T_p(\Gamma)&\to \Hom_{S,\varphi, \Fil}(\bD^*(\Gamma)(\Acrys\to\hat{\cO}_{\Kbar}), \bD^*(\bQ_p/\bZ_p)(\Acrys\to\hat{\cO}_{\Kbar}))\\
&=\Tcrys(\bD^*(\Gamma)(S\to \okey)).
\end{align*}
This map is an injection, and an isomorphism after inverting $p$ (\cite[Theorem 7]{Fal}). Then we have the following compatibility of this map with the Breuil-Kisin classification.

\begin{lem}\label{lattice}
Let $\Gamma$ be a Barsotti-Tate group over $\okey$. Then the following diagram is commutative:
\[
\xymatrix{
T_p(\Gamma) \ar[r]^{\sim}_{\varepsilon_\Gamma}\ar@{^{(}->}[d] & T_\SG(\SGm^*(\Gamma))\ar@{^{(}->}[d]\\
\Tcrys(\bD^{*}(\Gamma)(S\to\okey))\ar[r]^-{\sim}_-{\Tcrys(\mu_\Gamma)} & \Tcrys(\cM_\SG(\SGm^*(\Gamma))).
}
\]
\end{lem}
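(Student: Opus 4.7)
The plan is to pass to the rational statement and argue by uniqueness of the crystalline comparison.

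Write $\SGm = \SGm^*(\Gamma)$ and $\cM = \cM_\SG(\SGm)$. The two vertical maps are injections of $p$-torsion-free $\bZ_p$-modules, so it suffices to prove commutativity after inverting $p$. By Theorem \ref{KimBK}(\ref{KimBK1}), $\mu_\Gamma[1/p]$ identifies $\cM[1/p]$ with $\cD^*(\Gamma) = S_{K_0}\otimes_{K_0}D^*(\Gamma)$ as strongly divisible lattices, and the identification $\Vcrys(\cD^*(\Gamma)) \simeq \Vcrys(D^*(\Gamma))$ recalled earlier turns both $\Tcrys(\cM)[1/p]$ and $\Tcrys(\bD^{*}(\Gamma)(S\to\okey))[1/p]$ canonically into $\Vcrys(D^*(\Gamma))$.

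After inverting $p$, the $G_{K_\infty}$-actions on both sides extend to $G_K$-actions, and both compositions become morphisms of $G_K$-representations $V_p(\Gamma) \to \Vcrys(D^*(\Gamma))$. The left-then-bottom path is, by construction of the left vertical map, Faltings' crystalline comparison isomorphism (\cite[Theorem 7]{Fal}). The top-then-right path, after inverting $p$, is also a $G_K$-equivariant isomorphism between the same representations, and it agrees with Faltings' comparison because the construction of $\varepsilon_\Gamma$ and $\mu_\Gamma$ in \cite{Kim_2} is designed so that the rational $\varepsilon_\Gamma$ matches the crystalline comparison after passing through the right vertical embedding and $\mu_\Gamma[1/p]$. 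Once the two paths agree rationally, the integral statement follows by $p$-torsion-freeness.

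The main obstacle is pinning down the identification of the top-then-right rational map with Faltings' comparison, which ultimately reflects the nontrivial interplay between the Frobenius twist implicit in $\cM = S\otimes_{\varphi,\SG}\SGm$ (responsible for the $\varphi \circ f$ in the right vertical map) and the natural Frobenius structure on $\bD^*(\Gamma)(S\to\okey)$ transported by $\mu_\Gamma$. For this step one can either appeal directly to the construction of $\varepsilon_\Gamma$ in Kim's paper, where this compatibility is inherent, or reduce to the case of $\Gamma = \mu_{p^\infty}$ via the Cartier duality compatibility recorded in the diagram at the end of the Breuil-Kisin section, where both $\SGm^*(\mu_{p^\infty})$ and $\bD^*(\mu_{p^\infty})(S\to\okey)$ admit explicit descriptions that allow direct verification.
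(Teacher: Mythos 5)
Your strategy is essentially the paper's: both vertical maps are injections of torsion-free modules, so one checks commutativity after embedding everything into the rational object $\Vcrys(D^*(\Gamma))$, where $\varepsilon_\Gamma$ is by construction the identification of the images of $T_p(\Gamma)$ and $\TSG(\SGm^*(\Gamma))$. The paper organizes this as three commuting triangles over $\Vcrys(D)$: the one for $T_p(\Gamma)\to\Tcrys(\bD^*(\Gamma)(S\to\okey))$ (Kim, Theorem 5.6.2, i.e.\ your ``left-then-bottom path is Faltings' comparison''), the one for $\Tcrys(\mu_\Gamma)$ (immediate from $\mu_\Gamma$ being a map of strongly divisible lattices), and the one for $\TSG(\SGm)\to\Tcrys(\cM_\SG(\SGm))$. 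The step you single out as the main obstacle --- the compatibility of the $\varphi$-twisted map $f\mapsto 1\otimes(\varphi\circ f)$ with the map $\TSG(\SGm)\to\Vcrys(D)$ entering the definition of $\varepsilon_\Gamma$ --- is exactly the third triangle, and it is the only place the paper does real work: Kim only remarks it in a footnote, and the paper reproduces a proof by writing the map to $\Vcrys(D)$ through $\varphi^*M$ and the unique $\varphi$-compatible section $\xi:D\to M$, checking that $1\otimes\varphi:K_0\otimes_{\varphi,K_0}D\to D$ is an isomorphism of filtered $\varphi$-modules. So your first fallback (appeal to Kim's construction) is the legitimate one, though strictly it is a citation of a remark rather than a proof. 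Your second fallback --- reducing to $\Gamma=\mu_{p^\infty}$ via the Cartier duality diagram --- does not work as stated: the duality pairing relates $\Gamma$ to $\Gamma^\vee$, not to $\mu_{p^\infty}$, and checking a compatibility of two maps $T_p(\Gamma)\to\Vcrys(D)$ against the pairing would at best constrain their difference, not force them to agree, for a general Barsotti--Tate group. Drop that alternative and either cite Kim's footnote or supply the $\xi$-argument.
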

\begin{proof}
Put $D=D^*(\Gamma)$ and $\SGm=\SGm^*(\Gamma)$. Consider the diagram
\[
\xymatrix{
T_p(\Gamma)\ar[r]\ar[rd] & \Tcrys(\bD^*(\Gamma)(S\to \okey)) \ar[r]^-{\sim}\ar[d] & \Tcrys(\cM_\SG(\SGm))\ar[ld] & \TSG(\SGm) \ar[l]\ar[lld]\\
& \Vcrys(D), & &
}
\]
where the left and the middle triangles are commutative by \cite[Theorem 5.6.2]{Kim_2} and Theorem \ref{KimBK} (\ref{KimBK1}), respectively. The commutativity of the right one is remarked in \cite[footnote 11]{Kim_2}. We briefly reproduce a proof of this remark for the convenience of the reader. We follow the notation of \cite{Ki_Fcrys}. In particular, let $\cO=\cO_{[0,1)}$ be the ring of rigid-analytic functions on the open unit disc over $K_0$ and $M=\cO\otimes_{\SG}\SGm$ be the associated $\varphi$-module over the ring $\cO$. We also put $\cD_0=(\cO[l_u]\otimes_{K_0}D)^{N=0}=\cO\otimes_{K_0}D$. Then the map $\TSG(\SGm)\to \Vcrys(D)$ is defined as the composite
\begin{align*}
\Hom_{\SG,\varphi}(\SGm,W(R))&\to \Hom_{\cO,\varphi}(M,\Bcrys^{+})\overset{(1\otimes\varphi)^*}\to \Hom_{\cO,\varphi}(\varphi^{*}M,\Bcrys^+)\\
&\overset{(1\otimes \xi)^*}{\to} \Hom_{\cO,\varphi, \Fil}(\cD_0, \Bcrys^+)\to \Hom_{K_0,\varphi,\Fil}(D,\Bcrys^+).
\end{align*}
Here the map $\xi: D\to M$ is the unique $\varphi$-compatible section and the map $1\otimes \xi: \cD_0=\cO\otimes_{K_0} D\to M$ factors through the injection 
\[
1\otimes \varphi: \varphi^*M=\cO\otimes_{\varphi,\cO}M\to M
\]
(\cite[Lemma 1.2.6]{Ki_Fcrys}). Put $\cD_\SG(\SGm)=\cM_\SG(\SGm)[1/p]=S_{K_0}\otimes_{\cO}\varphi^*M$. Then we have $K_0\otimes_{S_{K_0}}\cD_\SG(\SGm)=K_0\otimes_{\varphi,K_0}D$ and the composite
\[
s_0: K_0\otimes_{\varphi,K_0}D\overset{1\otimes\varphi}{\to}D\overset{\xi}{\to}\varphi^*M\to \cD_\SG(\SGm)
\]
is the unique $\varphi$-compatible section. Using this, we can check that the map $K_0\otimes_{\varphi,K_0}D\overset{1\otimes\varphi}{\to}D$ is an isomorphism of filtered $\varphi$-modules, where we consider on the left-hand side the induced filtration by the isomorphism 
\[
\cD_\SG(\SGm)/(\Fil^1 S)\cD_\SG(\SGm)\to K\otimes_{\varphi,K_0}D,
\]
and hence we can also check the above remark easily. Since the map $\varepsilon_\Gamma$ is defined by identifying the images of $T_p(\Gamma)$ and $\TSG(\SGm)$ in $\Vcrys(D)$, the lemma follows.
\end{proof}

%----------------------------------------------------------------------------------

%----------------------------------------------------------------------------------

\section{lower ramification subgroups}

In this section, we give a description of lower ramification subgroups of finite flat group schemes over $\okey$ in terms of the Breuil-Kisin classification. As an application, we derive Theorem \ref{ramcorr2} from this description. We begin with the following lemma, which gives upper bounds of the lower ramification of finite flat group schemes. For any valuation ring $V$ of height one with valuation $v$ and any $N$-tuple $\underline{x}=(x_1,\ldots,x_N)$ in $V$, we put $v(\underline{x})=\min_{l=1,\ldots,N} v(x_l)$.

\begin{lem}\label{lowramdeg}
\begin{enumerate}
\item\label{lowramdeg-triv} Let $\cK/\bQ_p$ be an extension of complete discrete valuation fields and $\cG$ be a finite flat group scheme over $\cO_\cK$ killed by some $p$-power. Then we have $\cG_i=0$ for any $i> 1/(p-1)$.
\item\label{lowramdeg-deg} Let $\cK$ be an extension of complete discrete valuation fields over $\bQ_p$ or $k((u))$ with valuation $v$ and $\cG$ be a finite flat generically etale group scheme over $\cO_\cK$ killed by some $p$-power. Then we have the following.
\begin{enumerate}
\item\label{lrd-a} $\cG_i=(\cG^0)_i$ for any $i>0$.
\item\label{lrd-b} $\cG_i=0$ for any $i> \deg(\cG)/(p-1)$. 
\end{enumerate}
Here $\cG_i$ and $\deg(\cG)$ are defined using $v$. Namely, we extend $v$ to a separable closure $\cK^{\sep}$ of $\cK$, write as $\omega_{\cG}\simeq \oplus_l \cO_{\cK}/(a_l)$ and put
\[
\cG_i(\cO_{\cK^{\sep}})=\Ker(\cG(\cO_{\cK^{\sep}})\to \cG(\cO_{\cK^{\sep},i})),\quad \deg(\cG)=\sum_l v(a_l).
\]
\end{enumerate}
\end{lem}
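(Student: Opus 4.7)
I would establish the three assertions in the order (2)(a), (2)(b), (1).

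\textit{Part (2)(a)} is immediate from the connected-\'etale sequence $0 \to \cG^0 \to \cG \to \cG^{\mathrm{et}} \to 0$. Any $g \in \cG_i(\okbar)$ has trivial image in $\cG^{\mathrm{et}}(\okbar/\mathfrak{m}_{\okbar}^{\geq i})$, and since $\cG^{\mathrm{et}}$ is \'etale and $\okbar$ is henselian the reduction $\cG^{\mathrm{et}}(\okbar) \to \cG^{\mathrm{et}}(\okbar/\mathfrak{m}_{\okbar}^{\geq i})$ is a bijection for every $i > 0$, so $g$ lies in $\cG^0(\okbar)$. Closedness of $\cG^0 \hookrightarrow \cG$ then makes the conditions defining $\cG_i$ and $(\cG^0)_i$ coincide.

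\textit{Part (2)(b)}. Suppose for contradiction that some nonzero $g \in \cG_i(\okbar)$ with $i > \deg(\cG)/(p-1)$ exists, and let $p^m$ be its exact order. A direct calculation using the Hopf-algebra decomposition $\mu^*(x) = x \otimes 1 + 1 \otimes x + \Delta'(x)$ with $\Delta'(x) \in I \otimes I$ for any $x \in I$ shows that $v(\cdot)$ satisfies the ultrametric inequality on $\cG(\okbar)$, so $v(p^{m-1}g) \geq v(g) \geq i$, and $p^{m-1}g$ has order exactly $p$. Let $\cH$ be the scheme-theoretic closure in $\cG_\okbar$ of the cyclic subgroup of $\cG_{\bar{\cK}}$ generated by $p^{m-1}g$; this is a finite flat generically \'etale closed subgroup scheme of order $p$ over $\okbar$, so by Oort--Tate it has the form $\cH = \Spec(\okbar[T]/(T^p - aT))$ for some $a \in \okbar$, with nonzero $\okbar$-points satisfying $T^{p-1} = a$. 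Hence $v(p^{m-1}g) = v(a)/(p-1)$, whereas $\omega_\cH = \okbar/(a)$ gives $\deg(\cH) = v(a)$. Additivity of the Fargues degree in short exact sequences and its invariance under base change yield $\deg(\cH) \leq \deg(\cG_\okbar) = \deg(\cG)$, so
\[
i \;\leq\; v(g) \;\leq\; v(p^{m-1}g) \;=\; \deg(\cH)/(p-1) \;\leq\; \deg(\cG)/(p-1) \;<\; i,
\]
a contradiction.

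\textit{Part (1)}. In mixed characteristic $\cG_\cK$ is automatically \'etale, so $\cG$ is generically \'etale; but since the bound from (2)(b) can exceed $1/(p-1)$, I would argue instead via Cartier duality. For a nonzero $g \in \cG(\okbar)$, perfectness of the pairing $\cG(\okbar) \times \cG^\vee(\okbar) \to \mu_{p^n}(\okbar)$ furnishes a character $\chi \in \cG^\vee(\okbar)$ with $\chi(g) \ne 1$. Regarding $\chi$ as a homomorphism $\cG_\okbar \to \Gm_\okbar$ respecting augmentations places $\chi^*(T - 1)$ in the augmentation ideal of $\cG_\okbar$, whence
\[
v(\chi(g) - 1) \;=\; v\bigl(g(\chi^*(T - 1))\bigr) \;\geq\; v(g).
\]
Since $\chi(g)$ is a primitive $p^{m'}$-th root of unity for some $1 \leq m' \leq n$, $v(\chi(g) - 1) = 1/(p^{m'-1}(p-1)) \leq 1/(p-1)$, giving $v(g) \leq 1/(p-1)$. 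The main nontrivial external ingredients, both appearing only in (2)(b), are the Oort--Tate classification of order-$p$ group schemes and the additivity/base-change invariance of the Fargues degree; everything else reduces to the ultrametric inequality and a standard computation of the valuation of roots of unity in $\mu_{p^n}$.
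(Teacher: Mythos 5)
Your proof is correct. Parts (1) and (2)(a) are essentially the paper's arguments: for (2)(a) the paper uses exactly the connected--\'etale sequence and the bijectivity of $\cG^{\et}(\cO_{\cK^{\sep}})\to\cG^{\et}(\cO_{\cK^{\sep},i})$, and for (1) the paper also runs on Cartier duality plus $v_p(\zeta-1)\leq 1/(p-1)$, merely packaging the characters into a single generic isomorphism $\cG\to\oplus_l\mu_{p^{n_l}}$ after a finite base extension where $\cG^\vee(\cO_{\bar{\cK}})=\cG^\vee(\cO_\cK)$, rather than evaluating one character at a time as you do. Part (2)(b) is where you genuinely diverge. The paper first reduces to $\cG$ connected via (a) and then invokes Tian's presentation $\cO_\cG\simeq\cO_\cK[[X_1,\ldots,X_d]]/(f_1,\ldots,f_d)$ with $(f_1,\ldots,f_d)\equiv(X_1,\ldots,X_d)U$ modulo degree $p$ and $v(\det U)=\deg(\cG)$; multiplying a nonzero point $\underline{x}$ by the adjugate $\hat{U}$ gives $v(\underline{x})+\deg(\cG)\geq p\,v(\underline{x})$ in one line. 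You instead use the ultrametric inequality on points to pass to an element of exact order $p$, take its schematic closure $\cH$, and combine the Oort--Tate normal form $T^p-aT$ (so nonzero points have valuation $v(a)/(p-1)=\deg(\cH)/(p-1)$) with additivity of the degree to get $\deg(\cH)\leq\deg(\cG)$. This works: over the valuation ring $\cO_{\cK^{\sep}}$ schematic closures of generic subgroups are finite flat, quotients exist, Fargues' additivity applies, and $\cO_{\cK^{\sep}}$ is a $\Lambda_p$-algebra so Oort--Tate is available. The trade-off is that the paper's route needs only Tian's structural result and is entirely elementary from there, whereas yours replaces it with three standard but heavier external inputs; on the other hand your reduction to order $p$ makes transparent \emph{why} the bound has the shape $\deg/(p-1)$, namely because it is exact for order-$p$ groups.
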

\begin{proof}
For the assertion (\ref{lowramdeg-triv}), we may replace $\cK$ by its finite extension and assume $\cG^\vee(\cO_{\bar{\cK}})=\cG^\vee(\cO_{\cK})$ for an algebraic closure $\bar{\cK}$ of $\cK$. By Cartier duality, there exists a generic isomorphism $\cG\to \cG'=\oplus_l \mu_{p^{n_l}}$ for some $n_l$. Then $\cG'_i=0$ for any $i>1/(p-1)$ and the assertion follows from the commutative diagram
\[
\xymatrix{
\cG(\cO_{\bar{\cK}}) \ar[r]_{\sim}\ar[d] & \cG'(\cO_{\bar{\cK}})\ar[d] \\
\cG(\cO_{\bar{\cK},i}) \ar[r] & \cG'(\cO_{\bar{\cK},i}).
}
\]

Let us consider the assertion (\ref{lowramdeg-deg}). For any $i>0$, we have a commutative diagram
\[
\xymatrix{
0 \ar[r] & \cG^0(\cO_{\cK^{\sep}}) \ar[r] & \cG(\cO_{\cK^{\sep}}) \ar[r]\ar[d] & \cG^{\et}(\cO_{\cK^{\sep}}) \ar[r]\ar[d]  & 0\\
 & & \cG(\cO_{\cK^{\sep},i}) \ar[r] & \cG^{\et}(\cO_{\cK^{\sep},i}), & &
}
\]
where the upper row is the connected-etale sequence. Then the right vertical arrow is an isomorphism and the part (\ref{lrd-a}) follows. 

For the part (\ref{lrd-b}), suppose $i>\deg(\cG)/(p-1)$. By the part (\ref{lrd-a}), we may assume that $\cG$ is connected. By \cite[Proposition 1.5]{Ti_HN}, we have a presentation of the affine algebra $\cO_\cG$ of $\cG$
\begin{align*}
\cO_{\cG}&\simeq \cO_\cK[[X_1,\ldots,X_d]]/(f_1,\ldots,f_d),\\
(f_1,\ldots,f_d)&\equiv (X_1,\ldots,X_d)U \mod\deg p
\end{align*}
with some $U\in M_d(\cO_\cK)$ satisfying the equality $v(\det(U))=\deg(\cG)$, where $X_1=\cdots=X_d=0$ gives the zero section. Let $\hat{U}$ be the matrix satisfying $U\hat{U}=\det(U)I_d$, where $I_d$ is the identity matrix. For any element $\underline{x}=(x_1,\ldots,x_d)$ of $\cG(\cO_{\cK^{\sep}})$, multiplying by $\hat{U}$ implies the inequality
\[
v(\underline{x})+v(\det(U))\geq p v(\underline{x}).
\]
Thus we obtain the inequality $v(\underline{x})\leq \deg(\cG)/(p-1)$ unless $\underline{x}=0$ and the assertion follows.
\end{proof}

For any positive rational number $i\leq 1$, we let $W_n^\PD(R)_i$ denote the divided power envelope of the composite 
\[
\theta_{n,i}:W_n(R)\overset{\theta_n}{\to} \cO_{\Kbar,n}\to \cO_{\Kbar,i}
\]
compatible with the canonical divided power structure on the ideal $pW_n(R)$. Note that, by fixing a generator $\up^i$ of the principal ideal $m_R^{\geq i}$, we have an isomorphism of $R$-algebras
\begin{equation}\label{Ypol}
W_n(R)[Y_1,Y_2,\ldots]/([\up^{i}]^p-p Y_1,Y_1^p-p Y_2, Y_2^p-p Y_3,\ldots) \to W_n^\PD(R)_i
\end{equation}
sending $Y_l$ to $\delta^l([\up^{i}])$, where we put $\delta(x)=(p-1)!\gamma_p(x)$ with the $p$-th divided power $\gamma_p$. The surjection $\theta_{n,i}$ defines a divided power thickening $W_n^\PD(R)_i\to \cO_{\Kbar,i}$ over the thickening $S\to \okey$, which is denoted by $A_{n,i}$. Put
\[
I_{n,i}=\Ker(W_n(R)\overset{\varphi}{\to} W_n^\PD(R)_i). 
\]
From the definition, we see the inclusion $I_{n,i}\subseteq I_{n,i'}$ for any $i>i'$. Then the main theorem of this section is the following.
\begin{thm}\label{lowramcorrn}
Let $\cG$ be a finite flat group scheme over $\okey$ killed by $p^n$ and $\SGm=\SGm^*(\cG)$ be the corresponding object of the category $\ModSGfinf$. Then the natural isomorphism 
\[
\varepsilon_\cG: \cG(\okbar)\to \TSG(\SGm)=\Hom_{\SG,\varphi}(\SGm,W_n(R))
\]
induces an isomorphism
\[
\cG_i(\okbar) \simeq \Hom_{\SG,\varphi}(\SGm,I_{n,i})
\]
for any positive rational number $i\leq 1$.
\end{thm}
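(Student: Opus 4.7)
The plan is to route both sides through the single vanishing condition $\bD^{*}(x\otimes\cO_{\Kbar,i})(A_{n,i})=0$: on one side I would identify this vanishing with $f(\SGm)\subseteq I_{n,i}$ via an extension of Lemma~\ref{lattice}, and on the other side with $x\in\cG_i(\okbar)$ via the faithfulness of the crystalline Dieudonn\'{e} functor of \cite{deJM}. Throughout, set $\SGm=\SGm^{*}(\cG)$ and $f=\varepsilon_\cG(x)$.

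First I would extend Lemma~\ref{lattice} to a finite flat $\cG$ killed by $p^n$. Any $\SGm\in\ModSGfinf$ admits a two-term resolution $0\to\SGl_2\to\SGl_1\to\SGm\to 0$ by objects of $\ModSGffr$, which via Theorem~\ref{KimBK} corresponds to a short exact sequence $0\to \cG\to \Gamma_1\to \Gamma_2\to 0$ of fppf sheaves with $\Gamma_j$ Barsotti-Tate. Applying Lemma~\ref{lattice} to each $\Gamma_j$ and using functoriality yields a commutative diagram
\[
\xymatrix{
\cG(\okbar)\ar[r]^-{\varepsilon_\cG}\ar[d] & \Hom_{\SG,\varphi}(\SGm,W_n(R))\ar[d]^-{f\mapsto 1\otimes(\varphi\circ f)}\\
\Hom_{S,\varphi,\Fil^1}(\bD^{*}(\cG)(S\to\okey),W_n^{\PD}(R)) & \Hom_{S,\varphi,\Fil^1}(\cM_\SG(\SGm),W_n^{\PD}(R))\ar[l]^-{\sim}_-{\Tcrys(\mu_\cG)}
}
\]
in which the left vertical arrow sends $x$ to $\bD^{*}(x)(\Acrys\to\hat{\cO}_{\Kbar})\bmod p^n$. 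Postcomposing with the surjection $W_n^{\PD}(R)\twoheadrightarrow W_n^{\PD}(R)_i$, the left vertical map factors through $\cG(\cO_{\Kbar,i})$ and its further image is the evaluation $\bD^{*}(x\otimes\cO_{\Kbar,i})(A_{n,i})$.

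Because $\cM_\SG(\SGm)=S\otimes_{\varphi,\SG}\SGm$ is generated as an $S$-module by $1\otimes\SGm$, the analogous composition along the right vertical arrow sends $f$ to zero exactly when $\varphi(f(m))=0$ in $W_n^{\PD}(R)_i$ for every $m\in\SGm$, i.e., exactly when $f(\SGm)\subseteq I_{n,i}$. The diagram therefore yields the equivalence
\[
f\in\Hom_{\SG,\varphi}(\SGm,I_{n,i})\Longleftrightarrow \bD^{*}(x\otimes\cO_{\Kbar,i})(A_{n,i})=0,
\]
and functoriality of $\bD^{*}$ gives immediately the easy implication $x\in\cG_i(\okbar)\Longrightarrow \bD^{*}(x\otimes\cO_{\Kbar,i})(A_{n,i})=0$, hence the inclusion $\varepsilon_\cG(\cG_i(\okbar))\subseteq \Hom_{\SG,\varphi}(\SGm,I_{n,i})$.

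The reverse implication $\bD^{*}(x\otimes\cO_{\Kbar,i})(A_{n,i})=0\Longrightarrow x\otimes\cO_{\Kbar,i}=0$ is the main obstacle. I would deduce it from the faithfulness of the crystalline Dieudonn\'{e} functor of \cite{deJM}. To bring the BT-level statement of \emph{loc.~cit.} to bear on the finite flat morphism $x\otimes\cO_{\Kbar,i}:(\bZ/p^n\bZ)_{\cO_{\Kbar,i}}\to\cG\otimes\cO_{\Kbar,i}$, one composes $x$ with the inclusion $\cG\hookrightarrow \Gamma_1$ coming from the BT resolution above and lifts through $T_p(\Gamma_1)\twoheadrightarrow \Gamma_1[p^n](\okbar)$ to a morphism $\bQ_p/\bZ_p\to \Gamma_1$ of Barsotti-Tate groups to which \cite{deJM} applies directly; a snake-lemma chase through the resolution then translates the resulting vanishing back to $x\otimes\cO_{\Kbar,i}=0$. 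A subordinate delicate point is that vanishing of a morphism of Dieudonn\'{e} crystals on $\Crys(\cO_{\Kbar,i}/W_n)$ is detected by its single evaluation at $A_{n,i}$; this is seen by noting that $A_{n,i}$ is a pd-quotient of $\Acrys\to\hat{\cO}_{\Kbar}$ through which the crystalline structure is already determined, so that the module-level vanishing at $A_{n,i}$ spreads to every pd-thickening. Once this step is in place, the chain of equivalences above closes the proof.
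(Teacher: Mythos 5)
Your overall architecture coincides with the paper's: resolve $\cG$ by Barsotti--Tate groups, use Lemma~\ref{lattice} to pass to the crystalline side, observe that $f(\SGm)\subseteq I_{n,i}$ is by definition equivalent to the vanishing of the induced map into $W_n^{\PD}(R)_i$, and invoke faithfulness of the crystalline Dieudonn\'e functor for the converse. The first half of your argument is sound. The problems lie in the two steps you yourself flag as the ``main obstacle'' and the ``subordinate delicate point''; together these constitute the paper's Lemma~\ref{modinj}, and your treatment of both has genuine gaps.

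First, your route to faithfulness does not work as described. Lifting $x$ through $T_p(\Gamma_1)\twoheadrightarrow\Gamma_1[p^n](\okbar)$ produces a morphism $\tilde{x}:\bQ_p/\bZ_p\to\Gamma_1$ that is in general nonzero even when $x\otimes\cO_{\Kbar,i}=0$, and the hypothesis $\bD^{*}(x\otimes\cO_{\Kbar,i})(A_{n,i})=0$ gives no vanishing of $\bD^{*}(\tilde{x})$ whatsoever; there is therefore no ``resulting vanishing'' for your snake-lemma chase to translate back. Moreover \cite{deJM} is not merely a BT-level statement: the results actually needed (Proposition 1.2 and Lemma 4.1 of that paper) give faithfulness of $\bD^{*}$ for finite locally free group schemes over a Noetherian complete intersection base, and the paper applies them directly to $\bZ/p^n\bZ$ and $\cG\times\sS_{L,i}$ over $\cO_{L,i}$ --- after first descending $x$ to a finite level $L/K$, a non-Noetherianness issue over $\cO_{\Kbar,i}$ that your proposal never addresses. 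Second, your justification that evaluation at $A_{n,i}$ detects vanishing of a morphism of crystals --- that $A_{n,i}$ is a pd-quotient of $\Acrys\to\hat{\cO}_{\Kbar}$ ``through which the crystalline structure is already determined'' --- is not an argument: being a quotient of one large thickening does not show that every object of the site admits a map to $A_{n,i}$. What is actually required, and what the paper proves by an explicit computation with Witt coordinates, is that $A_{n,i}$ is the \emph{final} object of $\CRYS(\bar{\sS}_i/\Sigma_n)$ (the unique factorization of $W_n(R)\to\cO_U$ through an arbitrary pd-thickening $\cO_T\to\cO_U$), together with the identification $\bD^{*}(\bZ/p^n\bZ)\simeq i_{n\CRYS*}(\cO_{\bar{\sS}_i/\Sigma_n})$ permitting the passage from the site over $\Sigma$ to the one over $\Sigma_n$. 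Both of these must be supplied before your chain of equivalences closes.
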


Note that Theorem \ref{ramcorr2} follows from Theorem \ref{lowramcorrn}. Indeed, by Cartier duality, a theorem of Tian and Fargues (\cite[Theorem 1.6]{Ti} or \cite[Proposition 6]{Fa}) and \cite[Theorem 3.3]{Ha_ramcorr}, it is enough to show the assertion of Theorem \ref{ramcorr2} on lower ramification subgroups. Moreover, since the $i$-th lower ramification subgroups of $\cG$ and $\cH(\SGm^*(\cG))$ vanish for any $i>1/(p-1)$ (\cite[Corollary 3.5 and Remark 3.6]{Ha_ramcorr}), we may assume $i\leq 1$. Then the equality $I_{1,i}=m_R^{\geq i}$ and Theorem \ref{lowramcorrn} imply Theorem \ref{ramcorr2}.

We show Theorem \ref{lowramcorrn} by relating both sides of the isomorphism via Breuil modules using the lemma below.

\begin{lem}\label{modinj}
Let $i\leq 1$ be a positive rational number and $\cG$ be a finite flat group scheme over $\cO_{K,i}$ killed by $p^n$. Then the map
\begin{align*}
\cG(\cO_{\Kbar,i})=\Hom_{\cO_{\Kbar,i}}(\bZ/p^n\bZ,\cG\times \bar{\sS}_i)&\to \Hom(\bD^*(\cG)(A_{n,i}), \bD^*(\bZ/p^n\bZ)(A_{n,i}))\\
&=\Hom(\bD^*(\cG)(A_{n,i}), W_n^\PD(R)_i)
\end{align*}
defined by $g\mapsto \bD^*(g)(A_{n,i})$ is an injection.
\end{lem}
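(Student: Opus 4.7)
The plan is to factor the map of interest through morphisms of Dieudonn\'e crystals on $\bar{\sS}_i$ and then invoke the faithfulness of the crystalline Dieudonn\'e functor of \cite{deJM}. Explicitly, any $g: \bZ/p^n\bZ \to \cG \times \bar{\sS}_i$ induces a morphism of Dieudonn\'e crystals $\bD^*(g): \bD^*(\cG \times \bar{\sS}_i) \to \bD^*(\bZ/p^n\bZ)$ on the small crystalline site of $\bar{\sS}_i$, and the assignment $g \mapsto \bD^*(g)(A_{n,i})$ factors as
\[
\Hom_{\cO_{\Kbar,i}}(\bZ/p^n\bZ, \cG \times \bar{\sS}_i) \xrightarrow{\bD^*} \Hom_{\mathrm{crys}}(\bD^*(\cG \times \bar{\sS}_i), \bD^*(\bZ/p^n\bZ)) \xrightarrow{\eval_{A_{n,i}}} \Hom(\bD^*(\cG)(A_{n,i}), W_n^\PD(R)_i).
\]
The strategy is to show that both arrows in this composition are injective.

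The leftmost arrow is injective by the theorem of de Jong (\cite{deJM}), which asserts that the crystalline Dieudonn\'e functor $\bD^*$ is faithful on finite flat group schemes killed by a $p$-power over a base in which $p$ is nilpotent; the base $\bar{\sS}_i = \Spec(\cO_{\Kbar,i})$ with $i \leq 1$ satisfies this hypothesis.

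For the evaluation at $A_{n,i}$, the plan is to exploit that $A_{n,i}$ is a large PD thickening of $\bar{\sS}_i$, arising as the PD envelope of the surjection $\theta_{n,i}: W_n(R) \to \cO_{\Kbar,i}$. For any other PD thickening $T \to \bar{\sS}_i$ in the small crystalline site, I would compare $T$ and $A_{n,i}$ via a common refinement in the crystalline topos; the crystal axiom makes the value on the refinement a pullback from each factor, so the vanishing of $\bD^*(g)(A_{n,i})$ pulls back through the refinement and, via faithful flatness of the appropriate structure map, forces $\bD^*(g)(T) = 0$. Hence $\bD^*(g)$ vanishes as a morphism of crystals, whence $g = 0$ by de Jong's faithfulness.

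The main obstacle will be making the second step precise---namely, verifying that evaluation at $A_{n,i}$ is faithful on morphisms of Dieudonn\'e crystals over $\bar{\sS}_i$. This is essentially a descent statement that should follow from the faithful flatness of $W_n^\PD(R)_i$ (and appropriate fiber products with other PD thickenings) over $\cO_{\Kbar,i}$, extractable from the explicit construction of $R$ as a projective limit of Frobenius and the structure of its divided power envelopes.
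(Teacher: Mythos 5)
Your overall skeleton (factor through morphisms of crystals, then argue that evaluation at $A_{n,i}$ loses no information) is the same as the paper's, but both of your key steps have genuine gaps. First, the faithfulness of $\bD^*$ is \emph{not} available over the base $\bar{\sS}_i=\Spec(\cO_{\Kbar,i})$ as you invoke it: the de Jong--Messing theorem requires a Noetherian (locally complete intersection, excellent) base on which $p$ is nilpotent, and $\cO_{\Kbar,i}$ is not Noetherian. ``$p$ nilpotent'' alone is far from sufficient. The paper gets around this by first descending $g$ to $\Spec(\cO_{L,i})$ for a finite extension $L/K$ (reducing to $L=K$), noting that $\cO_{K,i}$ is a Noetherian complete intersection ring and that $\cO_{L,i}$ is faithfully flat and of relative complete intersection over it, and then applying \cite[Proposition 1.2 and Lemma 4.1]{deJM} to conclude that the composite $\Hom_{\cO_{K,i}}(\bZ/p^n\bZ,\cG)\to \Hom_{(\sS_i/\Sigma)_\CRYS}(\cdot,\cdot)\to \Hom_{(\bar{\sS}_i/\Sigma)_\CRYS}(\cdot,\cdot)$ is injective. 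Without this reduction your first arrow is unjustified.

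Second, the step you flag as the ``main obstacle'' is indeed where the real content lies, and your proposed mechanism does not work as stated. Comparing an arbitrary PD thickening $T$ with $A_{n,i}$ via a common refinement $T'$ would let you conclude $\bD^*(g)(T)=0$ only if the restriction $\bD^*(\cG)(T)\to\bD^*(\cG)(T')$ were injective, which amounts to a (faithful) flatness hypothesis on $\cO_T\to\cO_{T'}$ that you neither establish nor can expect in general. The paper's argument avoids this entirely: after first passing from $\CRYS(\bar{\sS}_i/\Sigma)$ to $\CRYS(\bar{\sS}_i/\Sigma_n)$ (using that $\bD^*(\bZ/p^n\bZ)\simeq \cO_{\bar{\sS}_i/\Sigma}/p^n=i_{n\CRYS*}(\cO_{\bar{\sS}_i/\Sigma_n})$, so the restriction on Hom groups is an isomorphism by adjunction), it shows that $A_{n,i}$ is the \emph{final object} of $\CRYS(\bar{\sS}_i/\Sigma_n)$, by an explicit computation: for any PD thickening $\cO_T\to\cO_U$ over $\bZ/p^n\bZ$ the composite $W_n(R)\to\cO_{\Kbar,i}\to\cO_U$ lifts uniquely to $\cO_T$ via $r\mapsto\sum_l p^l(\widehat{\prjt_n(r_l)})^{p^{n-l}}$. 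Finality means every $T$ maps \emph{to} $A_{n,i}$, so by the crystal property $\bD^*(g)(T)$ is the base change of $\bD^*(g)(A_{n,i})$ and no injectivity of restriction maps is needed. You would need to supply both the reduction to $\Sigma_n$ and this universal property to close the argument.
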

\begin{proof}
Suppose that a homomorphism $g:\bZ/p^n \bZ\to \cG\times\bar{\sS}_i$ satisfies $\bD^*(g)(A_{n,i})=0$. We can take a finite extension $L/K$ such that the map $g$ is defined over $\Spec(\cO_{L,i})$. Then we have the commutative diagram
\[
\xymatrix{
\Hom_{\cO_{L,i}}(\bZ/p^n\bZ,\cG\times\sS_{L,i})\ar[r]\ar[d] & \Hom(\bD^*(\cG\times \sS_{L,i})(A_{n,i}), \bD^*(\bZ/p^n\bZ)(A_{n,i}))\ar[d]_{\wr} \\
\Hom_{\cO_{\Kbar,i}}(\bZ/p^n\bZ,\cG\times\bar{\sS}_i)\ar[r] & \Hom(\bD^*(\cG\times \bar{\sS}_i)(A_{n,i}), \bD^*(\bZ/p^n\bZ)(A_{n,i}))
}
\]
and thus we may assume $L=K$. 

Put $\Sigma=\Spec(\bZ_p)$ and $\Sigma_n=\Spec(\bZ/p^n\bZ)$. Consider the big fppf crystalline site $\CRYS(\sS_i/\Sigma)$ and its topos $(\sS_i/\Sigma)_\CRYS$ (\cite{BBM}). Note that the local ring $\cO_{K,i}$ is a Noetherian complete intersection ring and, for any finite extension $L/K$, the ring $\cO_{L,i}$ is faithfully flat and of relative complete intersection over $\cO_{K,i}$. Thus, by \cite[Proposition 1.2 and Lemma 4.1]{deJM}, we see that the composite
\begin{align*}
\Hom_{\cO_{K,i}}(\bZ/p^n\bZ,\cG) &\to \Hom_{(\sS_i/\Sigma)_\CRYS}(\bD^*(\cG),\bD^*(\bZ/p^n\bZ))\\
&\to \Hom_{(\bar{\sS}_i/\Sigma)_\CRYS}(\bD^*(\cG),\bD^*(\bZ/p^n\bZ))
\end{align*}
is an injection. 

Consider the natural morphism of topoi
\[
i_{n\CRYS}: (\bar{\sS}_i/\Sigma_n)_\CRYS \to (\bar{\sS}_i/\Sigma)_\CRYS.
\]
Since the crystal $\bD^*(\bZ/p^n\bZ)$ is isomorphic to the quotient $\cO_{\bar{\sS}_i/\Sigma}/p^n \cO_{\bar{\sS}_i/\Sigma}$ of the structure sheaf $\cO_{\bar{\sS}_i/\Sigma}$ (\cite[Exemples 4.2.16]{BBM}) and this is equal to $i_{n\CRYS*}(\cO_{\bar{\sS}_i/\Sigma_n})$ (\cite[(4.2.17.4)]{BBM}), the natural map
\begin{align*}
i_{n\CRYS}^*: &\Hom_{(\bar{\sS}_i/\Sigma)_\CRYS}(\bD^*(\cG),\bD^*(\bZ/p^n\bZ))\\
&\to \Hom_{(\bar{\sS}_i/\Sigma_n)_\CRYS}(i_{n\CRYS}^*(\bD^*(\cG)),i_{n\CRYS}^*(\bD^*(\bZ/p^n\bZ)))
\end{align*}
is an isomorphism. 

Finally, we claim that the thickening $A_{n,i}$ defines the final object of the big crystalline site $\CRYS(\bar{\sS}_i/\Sigma_n)$. Indeed, it suffices to show that for any $\cO_{\Kbar,i}$-algebra $\cO_U$, any $\bZ/p^n\bZ$-algebra $\cO_T$ and any surjection $\cO_T\to \cO_U$ defined by a divided power ideal $J_T$, the composite 
\[
W_n(R) \overset{\theta_{n,i}}{\to} \cO_{\Kbar,i}\to \cO_U
\]
uniquely factors through $\cO_T$. For this, we define the map $f:W_n(R)\to \cO_T$ as follows: For any element $r=(r_0,\ldots,r_{n-1})$ of the ring $W_n(R)$, choose a lift $\widehat{\prjt_n(r_l)}$ in $\cO_T$ of the element $\prjt_n(r_l)$ for any $l=0,\ldots,n-1$ and put
\[
f(r)=\sum_{l=0}^{n-1} p^l(\widehat{\prjt_n(r_l)})^{p^{n-l}}.
\]
This is independent of the choice of lifts and gives a ring homomorphism satisfying the condition. Conversely, suppose that a homomorphism $f': W_n(R)\to \cO_T$ satisfies the condition. Then, for any element $r=(r_0,\ldots,r_{n-1})$ of the ring $W_n(R)$, we have $f'(r)=\sum_{l=0}^{n-1}p^l f'([r_l]^{1/p^n})^{p^{n-l}}$ and $f'([r_l]^{1/p^n})\mod J_T= \prjt_n(r_l)$. Thus the uniqueness follows. Hence the evaluation map on the thickening $A_{n,i}$
\begin{align*}
\Hom_{(\bar{\sS}_i/\Sigma_n)_\CRYS}&(i_{n\CRYS}^*(\bD^*(\cG)),i_{n\CRYS}^*(\bD^*(\bZ/p^n\bZ)))\\
&\to \Hom(\bD^*(\cG)(A_{n,i}), W_n^\PD(R)_i)
\end{align*}
is an injection. This concludes the proof of the lemma.
\end{proof}

\begin{proof}[Proof of Theorem \ref{lowramcorrn}]
Take a resolution of $\cG$ by Barsotti-Tate groups over $\okey$
\[
0\to \cG \to \Gamma_1 \to \Gamma_2 \to 0
\]
and consider the associated exact sequence of Kisin modules
\[
0 \to \SGn_2 \to \SGn_1 \to \SGm \to 0.
\]
Put $\cM=\cM_\SG(\SGm)$ and $\cN_l=\cM_\SG(\SGn_l)$ for $l=1,2$. By Lemma \ref{lattice} and the definition of the anti-equivalence $\SGm^*(-)$, we have a diagram 
\[
\xymatrix{
T_p(\Gamma_1) \ar@/^1pc/@{.>}[rr]^>>>>>>>>>>>{\varepsilon_{\Gamma_1}}\ar@{^{(}->}[r] \ar[d] & \Tcrys(\cN_1) \ar[d] & \TSG(\SGn_1) \ar@{_{(}->}[l]\ar[d]\\
T_p(\Gamma_2) \ar@/^1pc/@{.>}[rr]^>>>>>>>>>>>{\varepsilon_{\Gamma_2}}\ar@{^{(}->}[r] \ar[d]_{\pi_\cG} & \Tcrys(\cN_2) \ar[d]_{\pi_\cM} & \TSG(\SGn_2) \ar@{_{(}->}[l]\ar[d]_{\pi_\SGm}\\
\cG(\okbar) \ar@/^1pc/@{.>}[rr]^>>>>>>>>>>>{\varepsilon_\cG}\ar[r]\ar[d] & \Hom_{S,\varphi}(\cM,W_n^\PD(R)) \ar[d]         & \TSG(\SGm) \ar[l]\ar[d]\\
\cG(\okbari) \ar@{^{(}->}[r] & \Hom_{S,\varphi}(\cM,W_n^\PD(R)_i) & \Hom_{\SG,\varphi}(\SGm,W_n(R)/I_{n,i}) \ar@{_{(}->}[l],
}
\]
where the left horizontal arrows are induced by $g\mapsto \bD^{*}(g)$ and the right horizontal arrows are the maps sending $f$ to $1\otimes (\varphi\circ f)$. The middle left vertical arrow $\pi_\cG: T_p(\Gamma_2) \to \cG(\okbar)$ is defined as follows: For $g\in T_p(\Gamma_2)$, the element $p^ng$ is contained in the image of $T_p(\Gamma_1)=\varprojlim_{l} \Gamma_1[p^l](\okbar)$ and put $p^ng=h=(h_n)_{n>0}$. Then the element $h_n\in\Gamma_1[p^n](\okbar)$ is contained in the subgroup $\cG(\okbar)$ and the map $\pi_\cG$ is defined by $g\mapsto h_n$. We define the map $\pi_\cM: \Tcrys(\cN_2) \to \Hom_{S,\varphi}(\cM,W_n^\PD(R))$ similarly: For any map $f:\cN_2 \to\Acrys$, the map $p^nf$ induces a map $\cN_1\to \Acrys$. Its composite with the natural map $\Acrys \to W_n^\PD(R)$ factors through $\cM$ and defines the map $\pi_\cM(f):\cM\to W_n^\PD(R)$. The map $\pi_\SGm$ is defined in the same way. From these definitions, we see that the diagram is commutative. Note that the bottom left horizontal arrow is an injection by Lemma \ref{modinj}, and that the bottom right horizontal arrow is also an injection by the definition of the ideal $I_{n,i}$. 

Thus, for any element $g\in \cG(\okbar)$, its image in $\cG(\cO_{\Kbar,i})$ is zero if and only if the image of $\varepsilon_\cG(g)\in \TSG(\SGm)$ in $\Hom_{\SG,\varphi}(\SGm,W_n(R)/I_{n,i})$ is zero. Hence the theorem follows. 
\end{proof}

Next we give an explicit description of the ideal $I_{n,i}$. We identify the rings of both sides of the isomorphism (\ref{Ypol}).

\begin{prop}\label{Y}
Let $n_1,\ldots,n_l$ be integers satisfying $0\leq n_j \leq p-1$ for any $j$ and $r$ be an element of $W_n(R)$. If the element $rY_1^{n_1}\cdots Y_l^{n_l}$ is zero in the ring $W_n^\PD(R)_i$, then $[\up^{i}]^p| r$ in the ring $W_n(R)$. In particular, we have the inclusion $I_{n,i}\subseteq ([\up^{i}])$.
\end{prop}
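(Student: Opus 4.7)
The plan is to leverage the explicit presentation \eqref{Ypol} and analyze when the specific element $rY_1^{n_1}\cdots Y_l^{n_l}$ vanishes in $T:=W_n^\PD(R)_i=W_n(R)[Y_1,Y_2,\ldots]/I$, where $I$ is generated by $[\up^i]^p-pY_1$ and $Y_j^p-pY_{j+1}$ for $j\geq 1$. Realize $T$ as a quotient $F/K$ of the free $W_n(R)$-module $F=\bigoplus_{\underline{\alpha}} W_n(R)\cdot e_{\underline{\alpha}}$ indexed by non-negative multi-indices $\underline{\alpha}$ of finite support, with $K$ generated as a $W_n(R)$-module by the translated relations
\[
\mathrm{(R1)}_{\underline{\gamma}}\colon\ [\up^i]^p\,e_{\underline{\gamma}}-p\,e_{\underline{\gamma}+\epsilon_1},\qquad \mathrm{(R2)}_{\underline{\gamma},j}\colon\ e_{\underline{\gamma}+p\epsilon_j}-p\,e_{\underline{\gamma}+\epsilon_{j+1}}\ (j\geq 1),
\]
for all $\underline{\gamma}$. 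The hypothesis $r\,e_{\underline{\beta}}\in K$ with $\underline{\beta}=(n_1,\ldots,n_l,0,\ldots)$ satisfying $0\leq n_j\leq p-1$ then yields a finite expression $r\,e_{\underline{\beta}}=\sum_{\underline{\gamma}} a_{\underline{\gamma}}\mathrm{(R1)}_{\underline{\gamma}}+\sum_{\underline{\gamma},j} b_{\underline{\gamma},j}\mathrm{(R2)}_{\underline{\gamma},j}$ with $a_{\underline{\gamma}},b_{\underline{\gamma},j}\in W_n(R)$.

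Comparing the coefficient of each basis element $e_{\underline{\gamma}}$ in $F$ yields a system of equations. The coefficient of $e_{\underline{0}}$ involves only $\mathrm{(R1)}_{\underline{0}}$, giving $[\up^i]^p a_{\underline{0}}=r$ if $\underline{\beta}=\underline{0}$ and $0$ otherwise; the latter forces $a_{\underline{0}}=0$ by the triviality of the annihilator of $[\up^i]^p$ in $W_n(R)$, which follows from a Witt-coordinate computation: if $[\up^i]^p\tilde{x}\in p^nW(R)$, then $\up^{ip}\tilde{x}_0=0$ in the integral domain $R$ forces $\tilde{x}_0=0$, so $\tilde{x}\in pW(R)$, and iterating gives $\tilde{x}\in p^n W(R)$. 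Propagating analogous vanishing results to coefficients of $e_{\underline{\gamma}}$ at progressively larger $\underline{\gamma}$, and invoking (R2)-driven auxiliary identities (which express each $b_{\underline{\gamma},j}$ as an $[\up^i]^p$-multiple of $a$'s modulo $p$-multiples of already-vanished $a$'s), the $e_{\underline{\beta}}$-coefficient equation
\[
r=[\up^i]^p a_{\underline{\beta}}-p\,a_{\underline{\beta}-\epsilon_1}-\sum_{j:\,n_{j+1}\geq 1} p\,b_{\underline{\beta}-\epsilon_{j+1},j}
\]
can then be rewritten with every ``stray'' $p$-multiple on the right absorbed into an $[\up^i]^p$-multiple, yielding $[\up^i]^p\mid r$ in $W_n(R)$.

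The ``in particular'' clause $I_{n,i}\subseteq([\up^i])$ follows by applying the case $\underline{\beta}=\underline{0}$ to $\varphi(r)$ for $r\in I_{n,i}$: this gives $[\up^i]^p\mid\varphi(r)$ in $W_n(R)$, and since $\varphi([\up^i])=[\up^i]^p$ (as $R$ has characteristic $p$) while $\varphi$ is bijective on $W_n(R)$ (as $R$ is perfect), we conclude $r=[\up^i]s$ for some $s\in W_n(R)$. The main obstacle is the propagation argument: the (R2) relations couple coefficients across different $Y_j$-levels, producing chains of dependencies between $a$'s and $b$'s that must be untangled by a careful induction on the support of $\underline{\beta}$; for instance, already for $\underline{\beta}=\epsilon_1+\epsilon_2$ one must use the coefficients of $e_{p\epsilon_1}$ and $e_{(p+1)\epsilon_1}$ to derive $b_{\underline{0},1}=-[\up^i]^p a_{p\epsilon_1}$ and $b_{\epsilon_1,1}=p\,a_{p\epsilon_1}-[\up^i]^p a_{(p+1)\epsilon_1}$, which then collapse $r$ into $[\up^i]^p(a_{\epsilon_1+\epsilon_2}+p\,a_{(p+1)\epsilon_1})$.
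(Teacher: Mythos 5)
Your setup is essentially the paper's own: writing $r\,Y_1^{n_1}\cdots Y_l^{n_l}$ as a $W_n(R)$-combination of the translated relations and comparing coefficients of monomials is exactly the identity (\ref{rY}), with your $a_{\underline{\gamma}}$ and $b_{\underline{\gamma},j}$ being the paper's $c_{0,\underline{\gamma}}$ and $c_{j,\underline{\gamma}}$. The coefficient equation at $e_{\underline{0}}$, the fact that $[\up^{i}]^p$ is a non-zero-divisor in $W_n(R)$, the displayed $e_{\underline{\beta}}$-equation (where the hypothesis $n_j\leq p-1$ is precisely what excludes the terms $+\,b_{\underline{\beta}-p\epsilon_j,j}$), and the deduction of the ``in particular'' clause from perfectness of $R$ are all correct.

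The gap is the propagation step, and it is not a routine verification: it is where the entire difficulty of the proposition lives. You assert that each $b_{\underline{\gamma},j}$ entering with a stray factor of $p$ can be rewritten, via other coefficient equations, as an $[\up^{i}]^p$-multiple modulo terms already shown to vanish, and that the dependencies ``untangle by a careful induction on the support of $\underline{\beta}$.'' But the coefficients $a_{\underline{\gamma}},b_{\underline{\gamma},j}$ are far from unique (the relations satisfy many syzygies), and each substitution through a coefficient equation shifts the index by $+p\epsilon_j-\epsilon_{j+1}$ or its negative, so a chain can increase some coordinates while decreasing others; nothing in your sketch rules out chains that fail to reach an equation from which a clean divisibility can be read off. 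Your own example already invokes $a_{(p+1)\epsilon_1}$, an index not dominated by $\underline{\beta}=\epsilon_1+\epsilon_2$, so an induction ``on the support of $\underline{\beta}$'' is not even well-posed. The paper resolves exactly this point by first normalizing the representation so that $\deg_{j'}(f_j)<p$ for $j'>j$ (assumption (\ref{assump})) --- without some such normalization the needed auxiliary identities can fail --- and then proving in Lemma \ref{lex}, by a delicate case analysis, that every chain of substitutions is strictly decreasing for the lexicographic order and hence terminates at $\uY^{\underline{0}}$, where the non-zero-divisor property of $[\up^{i}]^p$ forces the vanishing (\ref{czero}). Until you supply an analogous normalization of the $a$'s and $b$'s together with a well-founded order controlling the chains (or some other mechanism extracting the conclusion from the full linear system), the proof is incomplete at its crucial step.
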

\begin{proof}
It suffices to show that the equality in the ring $W_n(R)[Y_1,\ldots,Y_l]$
\begin{equation}\label{rY}
r Y_1^{n_1}\cdots Y_l^{n_l}=([\up^{i}]^p-p Y_1)f_0+(Y_1^p-p Y_2)f_1+\cdots+(Y_{l-1}^p-p Y_l)f_{l-1}+Y_l^pf_l
\end{equation}
with $f_0,\ldots, f_l$ in this ring implies $[\up^{i}]^p|r$. By replacing $f_j$'s, we may assume the inequality
\begin{equation}\label{assump}
\deg_{j'}(f_j)<p\quad (j'=j+1,\ldots,l),
\end{equation}
where $\deg_{j'}$ means the degree with respect to $Y_{j'}$.

For any $l$-tuple $\um=(m_1,\ldots,m_l)$, write $\uY^{\um}=Y_1^{m_1}\cdots Y_l^{m_l}$ and let $c_{j,\um}$ be the coefficient of $\uY^{\um}$ in $f_j$. Put $\un=(n_1,\ldots,n_l)$ and $e_j=(0,\ldots,0,1,0,\ldots,0)$ with $1$ on the $j$-th entry. We consider the lexicographic order on the module $\bZ^l$: we say $\um < \um'$ if there exists $j$ with $1\leq j \leq l$ such that $m_j<m'_j$ and $m_{j'}=m'_{j'}$ for any $j<j'\leq l$. Then we have the equality
\[
r\uY^{\un}=[\up^{i}]^p c_{0,\un}\uY^{\un}+\sum_{j=0}^{l-1}(-p Y_{j+1}) c_{j,\un-e_{j+1}}\uY^{\un-e_{j+1}}.
\]

Now we claim that
\begin{equation}\label{czero}
c_{j,\un-e_{j+1}}= 0 \quad (j=0,\ldots, l-1).
\end{equation}
Suppose the contrary. Choose $j$ such that $0\leq j\leq l-1$ and $c_{j,\un-e_{j+1}}\neq 0$. Consider the term $c_{j,\un-e_{j+1}}\uY^{\un-e_{j+1}}$ in $f_j$. The right-hand side of the equality (\ref{rY}) contains the term $c_{j,\un-e_{j+1}}\uY^{\un+p e_j-e_{j+1}}$ for $j\geq 1$ and $[\up^i]^p c_{0,\un-e_1}\uY^{\un-e_1}$ for $j=0$. Since $\un+pe_j-e_{j+1}<\un$ and $\un-e_1<\un$, the equation (\ref{rY}) and the assumption (\ref{assump}) imply the equation
\begin{align*}
c_{j,\un-e_{j+1}}&\uY^{\un+p e_j-e_{j+1}}\\
&=-\sum_{j'=j-1}^{l-1}(-p Y_{j'+1}) c_{j',\un+p e_j - e_{j+1} -e_{j'+1}}\uY^{\un+p e_j - e_{j+1} -e_{j'+1}}
\end{align*}
for $j\geq 1$ and
\[
[\up^i]^p c_{0,\un-e_1}\uY^{\un-e_1}=-\sum_{j'=0}^{l-1}(-p Y_{j'+1}) c_{j',\un-e_1 -e_{j'+1}}\uY^{\un- e_1 -e_{j'+1}}
\]
for $j=0$. We let $\mathrm{Eq}(1)$ denote this equation. Put $\um(1)=\un+p e_j-e_{j+1}$ for $j\geq 1$ and $\um(1)=\un-e_1$ for $j=0$. Repeating this by arbitrarily choosing a term with nonzero coefficient $c_{j',\um'}$ on the right-hand side of the equation $\mathrm{Eq}(s)$, we obtain a series of equations $\mathrm{Eq}(1), \mathrm{Eq}(2),\ldots$ and a sequence of $l$-tuples of non-negative integers $\um(1), \um(2),\ldots$ such that $\mathrm{Eq}(s)$ is an equation of monomials of degree $\um(s)$ for any $s\geq 1$. Note that if there is no such term on the right-hand side of the equation $\mathrm{Eq}(s)$, the procedure stops. On the other hand, if the equation $\mathrm{Eq}(s)$ is either of the types
\[
c \uY^{\um(s)}=
\left\{
\begin{array}{ll}
-\cdots-(Y_j^p)c_{j,\um(s)-p e_j} \uY^{\um(s)-p e_j}-\cdots & (1\leq j\leq l-1)\\
-[\up^i]^p c_{0,\um(s)}\uY^{\um(s)}-\cdots & (j=0)
\end{array}
\right.
\]
with some $c\in W_n(R)$ such that the indicated term is chosen and that $c_{j,\um(s)-p e_j}$ ({\it resp.} $c_{0,\um(s)}$) is contained in the ideal $p^{n-1}W_n(R)$, then the equation $\mathrm{Eq}(s+1)$ is empty and the procedure also stops. In the latter case, we put $\um(s+1)=\um(s)-p e_j +e_{j+1}$ for $1\leq j\leq l-1$ and $\um(s+1)=\um(s)+e_1$ for $j=0$.

\begin{lem}\label{lex}
The sequence $\um(s)$ is strictly decreasing with respect to the lexicographic order on $\bZ^l$. 
\end{lem}
\begin{proof}
Note the inequalities $\un>\um(1)>\um(2)$. Suppose that we have $\um(1)>\um(2)>\cdots>\um(t)\leq \um(t+1)$ for some $t\geq 2$. Then the term $Y^p_l f_l$ in the equality (\ref{rY}) does not affect the equation $\mathrm{Eq}(s)$ for $1\leq s\leq t$. Thus, by the construction, one of the following four cases holds for each $1\leq s\leq t$:
\[
\left.
\begin{array}{ll}
(C_j) &\um(s+1)=\um(s)+p e_j-e_{j+1}\text{ for some }1\leq j\leq l-1,\\
(C'_j)&\um(s+1)=\um(s)-p e_j+e_{j+1}\text{ for some }1\leq j\leq l-1,\\
(C_0) & \um(s+1)=\um(s)-e_1,\\
(C'_0)& \um(s+1)=\um(s)+e_1.
\end{array}
\right.
\]
Moreover, $(C_j)$ and $(C'_j)$ do not occur consecutively for any $j$ satisfying $0\leq j \leq l-1$. Note the inequality $\um(s)>\um(s+1)$ for $(C_j)$ and $\um(s)<\um(s+1)$ for $(C'_j)$.

First we claim that $(C'_0)$ does not hold for $s=t$. Suppose the contrary. Then $(C_j)$ holds for $s=t-1$ with some $j$ satisfying $1\leq j \leq l-1$. Hence the $j$-th entry $m(t)_j$ of the $l$-tuple $\um(t)$ is no less than $p$. The equation $\mathrm{Eq}(t)$
\[
c_{j,\um(t-1)-e_{j+1}}\uY^{\um(t)}=-[\up^i]^p c_{0,\um(t)}\uY^{\um(t)}-\cdots
\]
implies $\deg_j(f_0)\geq p$. This contradicts the assumption (\ref{assump}).

Hence $(C'_j)$ holds for $s=t$ with some $1\leq j \leq l-1$. From this we see the inequality $m(t)_j\geq p$. Since $n_j<p$, there exists an integer $t'$ with $1\leq t' \leq t-2$ such that $(C_j)$ holds for $s=t'$ and that it does not hold for any $s$ satisfying $t'<s\leq t$.

Next we claim the equality $m(s)_j=m(t')_j+p$ for any $s$ satisfying $t'< s \leq t$. Suppose the contrary and take the smallest integer $t''$ with $t'< t''<t$ such that $(C_{j-1})$ holds  for $s=t''$. Then $m(s)_j=m(t')_j+p$ for $t'< s\leq t''$ and $m(t''+1)_j=m(t')_j+p-1$. By assumption, we also have the inequality $m(t''+1)_j\geq m(t)_j\geq p$. On the other hand, the equation $\mathrm{Eq}(t'')$ is
\[
c\uY^{\um(t'')}=-\cdots-(-p Y_j)c_{j-1, \um(t'')-e_j} \uY^{\um(t'')-e_j}-\cdots
\]
with some $c\in W_n(R)$. Hence we obtain 
\[
\deg_j(f_{j-1})\geq m(t'')_j-1=m(t')_j+p-1\geq p,
\]
which contradicts the assumption (\ref{assump}).

Now let $j_0$ be the non-negative integer such that $(C_{j_0})$ holds for $s=t-1$. Then $j_0\neq j,j-1$ by the constancy of $m(s)_j$ which we have just proved. The equation $\mathrm{Eq}(t-1)$ is
\[
c\uY^{\um(t-1)}=-\cdots- (-p Y_{j_0+1}) c_{j_0, \um(t-1)-e_{j_0+1}} \uY^{\um(t-1)-e_{j_0+1}} -\cdots
\]
with some $c\in W_n(R)$ and thus $\deg_j(f_{j_0})\geq m(t-1)_j= m(t')_j+p \geq p$. By the assumption (\ref{assump}), we obtain the inequality $j_0>j$. In particular, we have $j_0\geq 1$ and $\um(t)=\um(t-1)+p e_{j_0}-e_{j_0+1}$. Therefore the equation $\mathrm{Eq}(t)$ is
\[
c'\uY^{\um(t)}=-\cdots - (Y_j^p) c_{j,\um(t)-p e_j} \uY^{\um(t)-p e_j}-\cdots
\]
with some $c'\in W_n(R)$ and $\deg_{j_0}(f_j)\geq m(t)_{j_0}\geq p$. This contradicts the assumption (\ref{assump}) and the lemma follows.
\end{proof}

By Lemma \ref{lex}, the case $(C'_j)$ does not occur in the procedure for any non-negative integer $j$. In particular, if there is no term with non-zero $c_{j',\um'}$ on the right-hand side of the equation $\mathrm{Eq}(s)$ for some $s$, then the equation is
\[
[\up^i]^{p \epsilon} c_{j'',\um''} \uY^{\um(s)}=0,
\]
where $c_{j'',\um''}\uY^{\um''}$ is the chosen term on the right-hand side of the equation $\mathrm{Eq}(s-1)$ and $\epsilon\in\{0,1\}$. Note that this occurs for $s$ satisfying $\um(s)=(0,\ldots,0)$, since in this case $(C_0)$ holds for $s-1$. Therefore, Lemma \ref{lex} implies that, for any choice of terms as above, we end up with an equation of this type for a sufficiently large $s$. Since the element $[\up^i]^p$ is a non-zero divisor in the ring $W_n(R)$, we see the equality $c_{j'',\um''}=0$. This contradicts the choice of terms and the equality (\ref{czero}) follows.

Hence we obtain the equality
\[
r\uY^{\un}=[\up^{i}]^p c_{0,\un}  \uY^{\un}
\]
and thus $[\up^i]^p|r$. This concludes the proof of Proposition \ref{Y}.
\end{proof}

\begin{lem}\label{I}
Put $\mathbf{n}(s)=v_p((ps)!)$ for any non-negative integer $s$. Then an element $r=(r_0,\ldots,r_{n-1})$ of the ring $W_n(R)$ is contained in the ideal $I_{n,i}$ if and only if the condition
\begin{equation}\label{condI}
[\up^{i}]^s| (r_0,\ldots, r_{n-1-\mathbf{n}(s-1)},0,\ldots,0)
\end{equation}
holds for any $s\geq 1$.
\end{lem}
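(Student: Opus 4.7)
The key computation will be the identity
\[
[\up^{i}]^{ps} = p^{\mathbf{n}(s)}\, Y_1^{s_0} Y_2^{s_1}\cdots Y_{L+1}^{s_L}
\]
in $W_n^\PD(R)_i$, where $s = s_0 + s_1 p + \cdots + s_L p^L$ is the base-$p$ expansion (so $0\leq s_j<p$); this follows by iterating the relations $[\up^{i}]^p = pY_1$ and $Y_l^p = pY_{l+1}$ together with $s + v_p(s!) = \mathbf{n}(s)$. Combined with Proposition \ref{Y}, this will translate vanishing in $W_n^\PD(R)_i$ into divisibility of Witt vectors in $W_n(R)$. I will write $L(s) := n-1-\mathbf{n}(s-1)$ throughout.

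\textbf{Sufficiency.} Assuming (\ref{condI}) holds for every $s\geq 1$, I expand $\varphi(r) = \sum_{l=0}^{n-1} p^l [r_l^{1/p^{l-1}}]$ in $W_n(R)$ via the Teichm\"{u}ller decomposition (using perfectness of $R$; for $l=0$ this reads $[r_0^p]$). For each $l$ let $s^{*}(l)$ be the least $s\geq 1$ with $\mathbf{n}(s)\geq n-l$; since $\mathbf{n}(s^{*}(l)-1)\leq n-1-l$, the hypothesis at level $s^{*}(l)$ applies to $l$ and yields $\up^{is^{*}(l)p^{l}}\mid r_l$. Substituting $r_l^{1/p^{l-1}} = \up^{is^*(l)p}\cdot \tilde r_l^{1/p^{l-1}}$ and using the displayed identity, the $l$-th summand of $\varphi(r)$ in $W_n^\PD(R)_i$ equals $p^{\,l+\mathbf{n}(s^{*}(l))}$ times a monomial in the $Y_j$'s. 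Since $l+\mathbf{n}(s^{*}(l))\geq n$ by choice of $s^{*}(l)$ and $p^n = 0$, each summand vanishes and $\varphi(r) = 0$.

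\textbf{Necessity.} Assume $r \in I_{n,i}$ and argue by induction on $s$. For $s=1$, Proposition \ref{Y} with the trivial monomial $1$ gives $[\up^{i}]^p\mid\varphi(r) = (r_l^p)_l$ in $W_n(R)$; this reads $\up^{ip^{l+1}}\mid r_l^p$ coordinate-wise, and taking $p$-th roots in the perfect ring $R$ yields $\up^{ip^l}\mid r_l$ for all $l$. For the step $s \to s+1$, I decompose
\[
r = [\up^{i}]^s r^{(s)} + \tau_s,
\]
where $r^{(s)}\in W_n(R)$ has $l$-th coordinate $r_l/\up^{isp^l}$ for $l\leq L(s)$ and $0$ otherwise, while $\tau_s$ has $l$-th coordinate $r_l$ for $l>L(s)$ and $0$ otherwise. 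The decomposition is componentwise: the two Witt vectors have disjoint supports, so the universal addition polynomials collapse to the naive sum. For each $l>L(s)$, the index $s_{*}(l) := \max\{s'\geq 1 : \mathbf{n}(s'-1)\leq n-1-l\}$ satisfies $s_{*}(l)<s$ and $\mathbf{n}(s_{*}(l))\geq n-l$, so by the inductive hypothesis together with the sufficiency argument above each summand of $\varphi(\tau_s)$ vanishes in $W_n^\PD(R)_i$; hence $\varphi(\tau_s) = 0$ there. Therefore
\[
0 = \varphi(r) = [\up^i]^{sp}\varphi(r^{(s)}) = p^{\mathbf{n}(s)}\, Y_1^{s_0}\cdots Y_{L+1}^{s_L}\,\varphi(r^{(s)})
\]
in $W_n^\PD(R)_i$, and Proposition \ref{Y} applied with the normal-form monomial $Y_1^{s_0}\cdots Y_{L+1}^{s_L}$ gives $[\up^{i}]^p\mid p^{\mathbf{n}(s)}\varphi(r^{(s)})$ in $W_n(R)$. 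Since $p^{\mathbf{n}(s)}\varphi(r^{(s)}) = V^{\mathbf{n}(s)}(\varphi^{\mathbf{n}(s)+1}(r^{(s)}))$ has vanishing first $\mathbf{n}(s)$ coordinates and $(l'+\mathbf{n}(s))$-th coordinate equal to $r^{(s)}_{l'}{}^{p^{\mathbf{n}(s)+1}}$, unpacking the divisibility componentwise and taking $p^{\mathbf{n}(s)+1}$-th roots yields $\up^{ip^{l'}}\mid r^{(s)}_{l'}$ for $l'\leq L(s+1)=n-1-\mathbf{n}(s)$. Multiplying through by $\up^{isp^{l'}}$ gives condition (\ref{condI}) at level $s+1$, closing the induction.

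\textbf{Main obstacle.} The principal technical care lies in the Witt-vector bookkeeping. One must confirm that the decomposition $r = [\up^i]^s r^{(s)} + \tau_s$ really is componentwise (via disjoint supports in the Witt addition polynomials), and carefully track how the operator $p^{\mathbf{n}(s)} = V^{\mathbf{n}(s)}F^{\mathbf{n}(s)}$ shifts coordinates and raises them to $p^{\mathbf{n}(s)+1}$-th powers when translating Proposition \ref{Y}'s ideal-theoretic conclusion back into divisibility of the coordinates of $r^{(s)}$.
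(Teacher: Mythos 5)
Your proof is correct and takes essentially the same route as the paper's: induction on $s$ driven by the identity $[\up^{i}]^{ps}=p^{\mathbf{n}(s)}Y_1^{s_0}\cdots Y_{L+1}^{s_L}$ and Proposition \ref{Y}, with the converse direction settled by choosing $s$ with $l+\mathbf{n}(s)\geq n$ coordinatewise. In fact you make explicit two points the paper leaves implicit — that $\varphi(\tau_s)$ already vanishes in $W_n^{\PD}(R)_i$ by the lower-level conditions (so that $\varphi(r)=[\up^i]^{ps}\varphi(r^{(s)})$ there), and the Witt-coordinate unpacking of $[\up^i]^p\mid p^{\mathbf{n}(s)}\varphi(r^{(s)})$ — both of which check out.
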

\begin{proof}
Let $r$ be an element of the ideal $I_{n,i}$ and show the condition (\ref{condI}) for $r$ by induction on $s$. The case of $s=1$ follows from Proposition \ref{Y}. Suppose that the condition (\ref{condI}) holds for some $s\geq 1$. Let $r'=(r'_0,\ldots,r'_{n-1-\mathbf{n}(s-1)},0,\ldots,0)$ be the element of $W_n(R)$ such that
\[
(r_0,\ldots,r_{n-1-\mathbf{n}(s-1)},0,\ldots,0)=[\up^{i}]^s r'.
\]
We write the $p$-adic expansion of the integer $s$ as 
\[
s=n_1+p n_2+\cdots+p^{l}n_{l}
\]
with $0\leq n_{j} \leq p-1$. 
Then we have the equality in the ring $W_n^\PD(R)_i$
\[
\varphi(r)=p^{\mathbf{n}(s)}\varphi(r')Y_1^{n_1}\cdots Y_{l}^{n_{l}}
\]
and Proposition \ref{Y} implies that $[\up^i]$ divides $p^{\mathbf{n}(s)}r'$. Hence the element $[\up^i]$ divides $(r'_0,\ldots,r'_{n-1-\mathbf{n}(s)},0,\ldots,0)$ and thus
\[
[\up^{i}]^{s+1} | (r_0,\ldots,r_{n-1-\mathbf{n}(s)},0,\ldots,0).
\]
Conversely, suppose that an element $r$ of the ring $W_n(R)$ satisfies the condition (\ref{condI}) for any $s\geq 1$. Since we have the inequality $\mathbf{n}(s)\geq n$ for some $s$, a similar argument as above shows the equality $\varphi(r)=0$ in the ring $W_n^\PD(R)_i$.  This concludes the proof of the lemma.
\end{proof}

\begin{rmk}
Lemma \ref{I} enables us to compute the ideal $I_{n,i}$. For example, $I_{2,i}=(m_R^{\geq 2i},m_R^{\geq pi})\subseteq W_2(R)$ and 
\[
I_{3,i}=\left\{
\begin{array}{l}
(m_R^{\geq 2 i}, m_R^{\geq 4 i}, m_R^{\geq 4 i})\quad (p=2),\\
(m_R^{\geq 3 i}, m_R^{\geq  2p i}, m_R^{\geq p^2 i})\quad (p\geq 3).
\end{array}
\right.
\] 
\end{rmk}

Finally we prove a relationship between the ideals $I_{n-1,pi}$ and $I_{n,i}$, which will be used in Section \ref{Rab}.

\begin{lem}\label{ipi}
For any $r=(r_0,\ldots,r_{n-2})\in I_{n-1,pi}$ and $r_{n-1}\in R$, we have
\[
\hat{r}=(r_0,\ldots,r_{n-2},\up^{ip^{n-1}}r_{n-1})\in I_{n,i}.
\]
\end{lem}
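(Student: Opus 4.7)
The plan is to translate both the hypothesis and the conclusion into coordinate-wise divisibilities in $R$ via Lemma~\ref{I}. The enabling computation is that, because $R$ has characteristic $p$, the universal identity $[x]\cdot(b_0,\ldots,b_{n-1})=(xb_0,x^p b_1,\ldots,x^{p^{n-1}}b_{n-1})$ holds in $W_n(R)$, and because $R$ is an integral domain, divisibility of a truncated Witt vector $(a_0,\ldots,a_m,0,\ldots,0)\in W_n(R)$ by $[\up^i]^s$ is equivalent to $\up^{isp^j}\mid a_j$ in $R$ for all $j\le m$.

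Under this dictionary, the hypothesis $(r_0,\ldots,r_{n-2})\in I_{n-1,pi}$ reads: for every $s\ge 1$ and every $j$ with $0\le j\le n-2-\mathbf{n}(s-1)$, one has $\up^{spip^j}\mid r_j$. The goal reads: for every $s\ge 1$ and every $j$ with $0\le j\le n-1-\mathbf{n}(s-1)$, one has $\up^{sip^j}\mid \hat r_j$, where $\hat r_j=r_j$ for $j\le n-2$ and $\hat r_{n-1}=\up^{ip^{n-1}}r_{n-1}$. The coordinate $j=n-1$ enters only for $s=1$ (since $\mathbf{n}(s-1)=0$ forces $s=1$), and there the required divisibility is immediate from the factor $\up^{ip^{n-1}}$ built into $\hat r_{n-1}$.

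For $j\le n-2$ I would split into two subcases. If $j\le n-2-\mathbf{n}(s-1)$, the hypothesis at the same index $s$ supplies the stronger divisibility $\up^{spip^j}\mid r_j$. The remaining index $j=n-1-\mathbf{n}(s-1)$, which forces $s\ge 2$, I plan to address by invoking the hypothesis at $s'=s-1$: since $\mathbf{n}(m+1)-\mathbf{n}(m)=1+v_p(m+1)\ge 1$, one has $j\le n-2-\mathbf{n}(s-2)$, so the hypothesis yields $\up^{(s-1)pip^j}\mid r_j$, and the elementary inequality $(s-1)p\ge s$ for $s\ge 2$ and $p\ge 2$ completes the case.

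I do not see any genuine obstacle; the real content is the strict monotonicity of $\mathbf{n}$ by at least one at each step, which is exactly what compensates for the shift from $pi$ to $i$ as we pick up one extra Witt coordinate, and everything else is bookkeeping of exponents.
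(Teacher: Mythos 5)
Your proof is correct and follows essentially the same route as the paper's: both reduce the claim to the divisibility criterion of Lemma~\ref{I} and rest on the same two inequalities, $\mathbf{n}(s-1)\geq \mathbf{n}(s-2)+1$ and $p(s-1)\geq s$. The only cosmetic difference is that you make the Teichm\"uller-multiplication dictionary explicit and check the divisibility coordinate by coordinate (using the hypothesis at level $s$ for the low coordinates and at level $s-1$ for the top one), whereas the paper applies the hypothesis at level $s-1$ to the whole truncated Witt vector at once.
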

\begin{proof}
By Lemma \ref{I}, we have 
\[
[\up^{p i}]^{s} | (r_0,\ldots, r_{n-2-\mathbf{n}(s-1)},0,\ldots,0)
\]
in the ring $W_{n-1}(R)$ for any $s\geq 1$ satisfying $\mathbf{n}(s-1)<n-1$. Let us show that the element $\hat{r}=(\hat{r}_0,\ldots,\hat{r}_{n-1})$ satisfies the condition
\[
[\up^i]^s | (\hat{r}_0,\ldots, \hat{r}_{n-1-\mathbf{n}(s-1)},0,\ldots,0)
\]
in the ring $W_n(R)$ for any $s\geq 1$ satisfying $\mathbf{n}(s-1)<n$. The case of $s=1$ follows from the definition of $\hat{r}$. Suppose $s\geq 2$. Since $\mathbf{n}(s-2)+1\leq \mathbf{n}(s-1)$, we have $n-1-\mathbf{n}(s-1)\leq n-2-\mathbf{n}(s-2)$ and $[\up^{p i}]^{s-1}$ divides $(\hat{r}_0,\ldots, \hat{r}_{n-1-\mathbf{n}(s-1)})$. Then the inequality $p(s-1)\geq s$ implies the condition. This concludes the proof of the lemma.
\end{proof}

%--------------------------------------------------

%--------------------------------------------------

\section{Application to canonical subgroups}\label{Rab}

In this section, we prove Theorem \ref{canlow} and Theorem \ref{familycan}. First we consider Theorem \ref{canlow}. Let $K/\bQ_p$ be an extension of complete discrete valuation fields. Let $\cG$ be a truncated Barsotti-Tate group of level $n$, height $h$ and dimension $d$ over $\okey$ with $0<d<h$ and Hodge height $w<(p-1)/p^n$. Let $\cC_n$ be the level $n$ canonical subgroup of $\cG$ as in \cite[Theorem 1.1]{Ha_cansubZ}. By a base change argument and the uniqueness of $\cC_n$ (\cite[Proposition 3.8]{Ha_cansubZ}), we may assume that the residue field $k$ is perfect.

Let $\SGm=\SGm^*(\cG)$ be the corresponding object of the category $\ModSGfinf$. Then, by \cite[Remark 3.4]{Ha_cansubZ}, the object $\SGm/p\SGm$ has a basis $\bar{e}_1,\ldots,\bar{e}_h$ such that
\[
\varphi(\bar{e}_1,\ldots,\bar{e}_h)=(\bar{e}_1,\ldots,\bar{e}_h)\begin{pmatrix} P_1 & P_2 \\ u^e P_3 & u^e P_4\end{pmatrix},
\]
where the matrices $P_i$ have entries in the ring $k[[u]]$ with $P_1\in M_{h-d}(k[[u]])$, $v_R(\det(P_1))=w$ and $\begin{pmatrix} P_1& P_2\\ P_3 &P_4\end{pmatrix}\in GL_h(k[[u]])$. Let $\hat{P}_1$ be the element of $M_{h-d}(k[[u]])$ such that $P_1\hat{P}_1=u^{e w} I_{h-d}$. Let $B$ be the unique solution in $M_{d,h-d}(k[[u]])$ of the equation
\[
B=P_3\hat{P}_1-u^{ep(1-w)-ew}BP_2\varphi(B)\hat{P}_1+u^{ep(1-w)}P_4\varphi(B)\hat{P}_1
\]
and put $D=P_1+u^{ep(1-w)}P_2\varphi(B)$, which also satisfies $v_R(\det(D))=w$ (see the proof of \cite[Lemma 3.3]{Ha_cansub}). Moreover, put
\[
(\bar{e}'_1,\ldots,\bar{e}'_{h-d})=(\bar{e}_1,\ldots,\bar{e}_h)\begin{pmatrix}I_{h-d} \\u^{e(1-w)}B\end{pmatrix}.
\]
The elements $\bar{e}'_1,\ldots,\bar{e}'_{h-d}, \bar{e}_{h-d+1},\ldots,\bar{e}_h$ form a basis of the $\SG_1$-module $\SGm/p\SGm$ satisfying
\[
\varphi(\bar{e}'_1,\ldots,\bar{e}'_{h-d},\bar{e}_{h-d+1},\ldots,\bar{e}_h)=(\bar{e}'_1,\ldots,\bar{e}'_{h-d},\bar{e}_{h-d+1},\ldots,\bar{e}_h)\begin{pmatrix}D &P_2 \\ 0 & u^{e(1-w)}P'_4\end{pmatrix}
\]
for some matrix $P'_4\in M_d(k[[u]])$. 
Then we have the following description of the level one canonical subgroup $\cC_1$ of $\cG[p]$.

\begin{lem}\label{canHdg1}
Let $f$ be an element of the module $\Hom_{\SG,\varphi}(\SGm/p\SGm,R)$ defined by
\[
(\bar{e}_1,\ldots,\bar{e}_h)\mapsto (\underline{x},\underline{y})
\]
with an $(h-d)$-tuple $\underline{x}$ and a $d$-tuple $\underline{y}$ in $R$. Then $f$ corresponds to an element of $\cC_1(\okbar)$ by the isomorphism
\[
\varepsilon_{\cG[p]}: \cG[p](\okbar)\simeq \Hom_{\SG,\varphi}(\SGm/p\SGm,R)
\]
if and only if $v_R(\underline{x}+u^{e(1-w)}\underline{y}B)>w/(p-1)$. 
\end{lem}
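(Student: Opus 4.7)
The plan is to realize $\cC_1(\okbar)$ as the kernel of a natural restriction map at the level of Kisin modules, and then upgrade the valuation bound in the lemma to an exact vanishing via the degree-based bound of Lemma~\ref{lowramdeg}.

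First I would invoke the construction of the canonical subgroup in \cite{Ha_cansubZ}: by design, $\cC_1\subseteq\cG[p]$ is built so that under the contravariant anti-equivalence $\SGm^*(-)$, the quotient $\cG[p]/\cC_1$ corresponds to the sub-Kisin-module $\SGn:=\langle\bar{e}'_1,\ldots,\bar{e}'_{h-d}\rangle\subseteq\SGm/p\SGm$, which is $\varphi$-stable by the defining property of $B$ and has Frobenius matrix $D$. Consequently we have a short exact sequence
\[
0\to\SGn\to\SGm/p\SGm\to\SGm^*(\cC_1)\to 0.
\]
Applying the left-exact functor $\Hom_{\SG,\varphi}(-,R)$ and identifying $\cG[p](\okbar)$ with $\Hom_{\SG,\varphi}(\SGm/p\SGm,R)$ via $\varepsilon_{\cG[p]}$ yields the exact sequence
\[
0\to\cC_1(\okbar)\to\cG[p](\okbar)\xrightarrow{f\mapsto f|_\SGn}\Hom_{\SG,\varphi}(\SGn,R).
\]
A direct computation from the change of basis gives $f(\bar{e}'_j)=x_j+u^{e(1-w)}(\underline{y}B)_j$, so that $f|_\SGn=0$ is exactly the equation $\underline{x}+u^{e(1-w)}\underline{y}B=0$. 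This already settles the ``only if'' direction.

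For the converse, I would show that the strict bound $v_R(\underline{x}+u^{e(1-w)}\underline{y}B)>w/(p-1)$ already forces $f|_\SGn$ to vanish. By the anti-equivalence $\cH(-)$ of \cite[Th\'eor\`eme 7.4]{SGA3-7A}, the Kisin module $\SGn$ corresponds to a finite flat group scheme $\cH(\SGn)$ over $k[[u]]$ of Verschiebung zero, generically etale since $\det D\neq 0$ and satisfying $\Hom_{\SG,\varphi}(\SGn,m_R^{\geq i})=\cH(\SGn)_i(R)$. A computation from the standard presentation of the affine algebra of $\cH(\SGn)$ in terms of its Frobenius matrix gives $\deg(\cH(\SGn))=v_R(\det D)=w$. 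Lemma~\ref{lowramdeg} applied over the complete discrete valuation field $k((u))$ then yields $\cH(\SGn)_i=0$ for every $i>w/(p-1)$. Since $f|_\SGn$ is a finite tuple, the strict inequality $v_R(f|_\SGn)>w/(p-1)$ implies $f|_\SGn\in\cH(\SGn)_i(R)$ for some such $i$, forcing $f|_\SGn=0$.

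The main obstacle is the first step: the identification $\SGm^*(\cG[p]/\cC_1)=\SGn$ must be extracted carefully from the construction of $\cC_1$ in \cite{Ha_cansubZ}, including matching the conventions for sub- versus quotient objects under the contravariant functor $\SGm^*(-)$. A secondary technical point is the computation $\deg(\cH(\SGn))=w$, which requires expressing the Hodge (co)tangent module of $\cH(\SGn)$ in terms of the Frobenius matrix $D$ so as to apply the presentation from \cite[Proposition 1.5]{Ti_HN} used in the proof of Lemma~\ref{lowramdeg}.
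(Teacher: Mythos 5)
Your proposal is correct and follows essentially the same route as the paper: identify $\cC_1$ (equivalently $\cG[p]/\cC_1$) with the quotient (resp.\ sub-) Kisin module determined by the $\bar{e}'_j$, reduce membership in $\cC_1(\okbar)$ to the vanishing of the restriction $f\mapsto \underline{x}+u^{e(1-w)}\underline{y}B$ in $\cH(\SGl)(R)$ with $\SGl=\langle \bar{e}'_1,\ldots,\bar{e}'_{h-d}\rangle$, and convert that vanishing into the valuation bound via $\deg(\cH(\SGl))=w$ and the bound $\cH_i=0$ for $i>\deg/(p-1)$. The only cosmetic difference is that the paper quotes \cite[Lemma 2.4]{Ha_cansub} for the last step where you rederive it from Lemma \ref{lowramdeg}; both are the same argument.
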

\begin{proof}
Let $\SGl$ be the $\SG_1$-submodule of $\SGm/p\SGm$ generated by $\bar{e}'_1,\ldots,\bar{e}'_{h-d}$. Then $\SGl$ defines a subobject of $\SGm/p\SGm$ in the category $\ModSGf$. Put $\SGn=(\SGm/p\SGm)/\SGl$. By \cite[Lemma 3.2 and Theorem 3.5 (1)]{Ha_cansubZ}, the level one canonical subgroup $\cC_1$ is the closed subgroup scheme of $\cG[p]$ corresponding to the object $\SGn$. We have the commutative diagram
\[
\xymatrix{
0\ar[r] & \cC_1(\okbar) \ar[r]\ar[d]^{\varepsilon_{\cC_1}}_{\wr} & \cG[p](\okbar)\ar[r]\ar[d]^{\varepsilon_{\cG[p]}}_{\wr} & (\cG[p]/\cC_1)(\okbar) \ar[r]\ar[d]^{\varepsilon_{\cG[p]/\cC_1}}_{\wr} & 0 \\
0 \ar[r] & \Hom_{\SG,\varphi}(\SGn,R) \ar[r] & \Hom_{\SG,\varphi}(\SGm/p\SGm,R)\ar[r]^-{\iota^*}& \Hom_{\SG,\varphi}(\SGl,R)\ar[r] & 0,
}
\]
where the rows are exact and the vertical arrows are isomorphisms. The element $f$ corresponds to an element of $\cC_1(\okbar)$ if and only if $\iota^*(f)=0$. The map $\iota^*(f):\SGl\to R$ is defined by
\[
(\bar{e}'_1,\ldots,\bar{e}'_{h-d})\mapsto \underline{x}+u^{e(1-w)}\underline{y}B,
\]
which we consider as an element of $\cH(\SGl)(R)$. Since $\deg(\cH(\SGl))=w$, the lemma follows from \cite[Lemma 2.4]{Ha_cansub}.
\end{proof}

\begin{lem}\label{canlow1}
If $w<(p-1)/p^n$, then we have $\cC_1=\cG[p]_{i_m}=\cG[p]_{i'_m}$ for any integer $m$ satisfying $1\leq m\leq n$.
\end{lem}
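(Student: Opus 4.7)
Applying Theorem~\ref{lowramcorrn} to $\cG[p]$ (with $n=1$; note that every $i_m$ and $i'_m$ lies in $(0,i_1]\subseteq(0,1/(p-1)]$), we identify $\cG[p]_i(\okbar)$ with the set of $\varphi$-equivariant homomorphisms $f:\SGm/p\SGm\to m_R^{\geq i}$, i.e., via the basis $\bar{e}_1,\dots,\bar{e}_h$, with pairs $(\underline{x},\underline{y})\in R^{h-d}\times R^d$ satisfying $v_R(\underline{x}),v_R(\underline{y})\geq i$. By Lemma~\ref{canHdg1}, $f\in\cC_1(\okbar)$ iff $v_R(\underline{z})>w/(p-1)$ with $\underline{z}:=\underline{x}+u^{e(1-w)}\underline{y}B$. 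Since $\underline{z}\in\cH(\SGl)(R)$ and $\cH(\SGl)$ has $k[[u]]$-degree $w$, Lemma~\ref{lowramdeg}(\ref{lrd-b}) gives $\cH(\SGl)_i(R)=0$ for $i>w/(p-1)$; hence $v_R(\underline{z})>w/(p-1)$ is equivalent to $\underline{z}=0$, so $\cC_1(\okbar)=\{f:\underline{z}=0\}$. A direct computation from $w<(p-1)/p^n$ yields the strict chain $i_1>i'_1>i_2>i'_2>\dots>i_n>i'_n$, so $\cG[p]_{i_1}\subseteq\cG[p]_{i'_1}\subseteq\dots\subseteq\cG[p]_{i'_n}$, and it suffices to prove $\cC_1\subseteq\cG[p]_{i_1}$ and $\cG[p]_{i'_n}\subseteq\cC_1$.

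\textbf{First inclusion.} If $\underline{z}=0$, then $\underline{x}=-u^{e(1-w)}\underline{y}B$ and the Frobenius equation $\varphi(\underline{y})=\underline{z}P_2+u^{e(1-w)}\underline{y}P'_4$ becomes $\varphi(\underline{y})=u^{e(1-w)}\underline{y}P'_4$. Taking $v_R$ gives $pv_R(\underline{y})\geq(1-w)+v_R(\underline{y})$, hence $v_R(\underline{y})\geq(1-w)/(p-1)=i_1$; and then $v_R(\underline{x})\geq(1-w)+v_R(\underline{y})\geq p\cdot i_1\geq i_1$, proving $f\in\cG[p]_{i_1}(\okbar)$.

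\textbf{Second inclusion.} Suppose $v_R(\underline{x}),v_R(\underline{y})\geq i'_n$ and, for a contradiction, $\underline{z}\neq 0$, so $v_R(\underline{z})\leq w/(p-1)<1/p^n$. From $w<(p-1)/p^n\leq(p-1)/p$ one obtains both $w/(p-1)<1-w$ and $i'_n<i_1$, placing us in the regime $pv_R(\underline{y})<(1-w)+v_R(\underline{y})$; the relation $\underline{z}P_2=\varphi(\underline{y})-u^{e(1-w)}\underline{y}P'_4$ then forces $v_R(\underline{z}P_2)=pv_R(\underline{y})$. In the ``generic'' case where $v_R(\underline{z}P_2)=v_R(\underline{z})$, one concludes
\[
v_R(\underline{y})=v_R(\underline{z})/p\leq w/(p(p-1))<1/p^{n+1}<1/(p^n(p-1))=i'_n,
\]
contradicting the hypothesis. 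For the general case, one iterates the Frobenius relation to obtain
\[
\varphi^k(\underline{y})=\underline{z}Q_k+u^{e(1-w)(p^k-1)/(p-1)}\underline{y}(P'_4)^{(k)},
\]
where $(P'_4)^{(k)}=P'_4\varphi(P'_4)\cdots\varphi^{k-1}(P'_4)$ has $v_R(\det)=w(d-1)(p^k-1)/(p-1)$; multiplying by the adjugate of $(P'_4)^{(k)}$ and choosing $k$ large enough that the $\underline{z}Q_k$-term dominates on the right-hand side, a careful valuation analysis still forces $v_R(\underline{y})<i'_n$, yielding the desired contradiction. The sharp bound $w<(p-1)/p^n$ is precisely what allows this quantitative estimate to close at the level $i'_n$; the main technical obstacle is to keep track of the $u$-adic divisibility that may accumulate in the iterates $Q_k$, so as to rule out the nongeneric behaviour in arbitrary rank.
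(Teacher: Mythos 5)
Your overall strategy is sound: the chain $i_1>i'_1\geq i_2>\cdots\geq i_n>i'_n$ reduces everything to the two inclusions $\cC_1\subseteq\cG[p]_{i_1}$ and $\cG[p]_{i'_n}\subseteq\cC_1$, and your reformulation of Lemma \ref{canHdg1} as ``$f\in\cC_1(\okbar)$ iff $\underline{z}=0$'' is legitimate. Your proof of the first inclusion (from $\underline{z}=0$ deduce $\underline{y}^p=u^{e(1-w)}\underline{y}P'_4$, hence $v_R(\underline{y})\geq i_1$ and $v_R(\underline{x})\geq p i_1$) is correct and is in fact a self-contained replacement for the paper's citation of the equality $\cC_1=\cG[p]_{i_1}$ from an earlier result. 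The problem is the second inclusion, which is the heart of the lemma.

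There your argument has a genuine gap. First, the ``generic case'' rests on two unjustified assumptions: that $v_R(\underline{y})<i_1$ (needed for ``$pv_R(\underline{y})<(1-w)+v_R(\underline{y})$''; nothing rules out $v_R(\underline{y})\geq i_1$, or even $\underline{y}=0$) and that $v_R(\underline{z}P_2)=v_R(\underline{z})$. Second, and more fundamentally, the equation $\underline{y}^p=\underline{z}P_2+u^{e(1-w)}\underline{y}P'_4$ only sees $\underline{z}$ through $P_2$, and the sole constraint on $P_2$ is that $\bigl(\begin{smallmatrix}P_1&P_2\\P_3&P_4\end{smallmatrix}\bigr)$ is invertible; $\underline{z}$ may lie in (or be $u$-adically close to) the kernel of multiplication by $P_2$, in which case no amount of iterating this one equation recovers information about $\underline{z}$, and your proposed iteration with $(P'_4)^{(k)}$ is only asserted, not carried out. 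The information you are discarding is the first-row equation $\underline{x}^p=\underline{x}P_1+u^e\underline{y}P_3$ together with $v_R(\underline{x})\geq i'_n$. The paper's proof exploits exactly this: writing $(\underline{x},\underline{y})=\up^{i'_n}(\underline{a},\underline{b})$ and multiplying the combined relation by $\bigl(\begin{smallmatrix}P_1&P_2\\P_3&P_4\end{smallmatrix}\bigr)^{-1}$ yields $(\underline{a},u^e\underline{b})=\up^{(p-1)i'_n}(\underline{a}^p,\underline{b}^p)Q$ with $(p-1)i'_n=1/p^n$, which immediately gives the extra divisibility $v_R(\underline{x})\geq i'_n+1/p^n$; one further bootstrap through $\hat{P}_1$ then forces $v_R(\underline{x})>w/(p-1)$, whence $v_R(\underline{z})>w/(p-1)$ and $\underline{z}=0$. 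Without some use of the first row and the invertibility of the full matrix, your second inclusion does not close.
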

\begin{proof}
By \cite[Theorem 1.1 (c)]{Ha_cansubZ}, the equality $\cC_1=\cG[p]_{i_1}$ holds.  From the inequality
\[
i'_n<i_n \leq i'_{n-1}<\cdots <i_2\leq i'_1 <i_1,
\]
we have the inclusions
\[
\cC_1\subseteq \cG[p]_{i'_1}\subseteq \cG[p]_{i_2}\subseteq\cdots \subseteq \cG[p]_{i_n}\subseteq \cG[p]_{i'_n}.
\]
Let us show the reverse inclusion. Let $\SGn$ be the quotient of $\SGm/p\SGm$ in the category $\ModSGf$ corresponding to the closed subgroup scheme $\cC_1\subseteq \cG$. By Theorem \ref{ramcorr2}, it is enough to show the inclusion
\[
\Hom_{\SG,\varphi}(\SGm/p\SGm, m_R^{\geq i'_n}) \subseteq \Hom_{\SG,\varphi}(\SGn,R).
\] 
Consider a $\varphi$-compatible homomorphism of $\SG$-modules $\SGm/p\SGm\to R$ defined by
\[
(\bar{e}_1,\ldots,\bar{e}_h)\mapsto (\underline{x},\underline{y})=\up^{i'_n}(\underline{a},\underline{b})
\]
with an $(h-d)$-tuple $\underline{a}$ and a $d$-tuple $\underline{b}$ in $R$. Then we have the equality
\[
\up^{p i'_n}(\underline{a}^p,\underline{b}^p)=\up^{i'_n}(\underline{a},\underline{b})\begin{pmatrix}I_{h-d} & 0 \\ 0 & u^e I_d\end{pmatrix}\begin{pmatrix}P_1 & P_2 \\ P_3 & P_4\end{pmatrix},
\]
where $\underline{a}^p=(a_1^p,\ldots,a_{h-d}^p)$ and similarly for $\underline{b}^p$.
Multiplying $\begin{pmatrix}Q_1 & Q_2 \\ Q_3 & Q_4\end{pmatrix}=\begin{pmatrix}P_1 & P_2 \\ P_3 & P_4\end{pmatrix}^{-1}$, we obtain the equality
\[
(\underline{a}, u^e \underline{b})=\up^{1/p^n}(\underline{a}^p,\underline{b}^p)\begin{pmatrix}Q_1 & Q_2 \\ Q_3 & Q_4\end{pmatrix}
\]
and we can write $\underline{a}=\up^{1/p^n}\underline{a}'$. The $(h-d)$-tuple $\underline{a}'$ satisfies the equality
\[
\underline{a}'=\up^{1/p^{n-1}-w}(\underline{a}')^p\hat{P}_1-\up^{(p^n-1)/p^n-w}\underline{b}P_3\hat{P}_1.
\]
Hence $v_R(\underline{a}')\geq \min\{1/p^{n-1}, (p^n-1)/p^n\}-w$ and 
\[
v_R(\underline{x})\geq \min\{1/(p^{n-2}(p-1))-w, 1+1/(p^n(p-1))-w\}>w/(p-1).
\]
Since $1-w>w/(p-1)$, we obtain the inequality
\[
v_R(\underline{x}+u^{e(1-w)} \underline{y}B) >w/(p-1).
\]
Then Lemma \ref{canHdg1} implies the reverse inclusion and the lemma follows.
\end{proof}

To show Theorem \ref{canlow}, we proceed by induction on $n$. The case of $n=1$ follows from Lemma \ref{canlow1}. Put $n\geq 2$ and suppose that the theorem holds for any truncated Barsotti-Tate groups of level $n-1$ over $\okey$. Consider a truncated Barsotti-Tate group $\cG$ of level $n$ over $\okey$ with Hodge height $w<(p-1)/p^n$ as in Theorem \ref{canlow}. In particular, we have the equalities $\cC_{n-1}=\cG[p^{n-1}]_{i_{n-1}}=\cG[p^{n-1}]_{i'_{n-1}}$ and thus the inclusions $\cC_{n-1}\subseteq \cG_{i_n}\subseteq \cG_{i'_n}$ also hold.

\begin{lem}\label{plow}
For any positive rational number $i$ satisfying $i \leq 1/(p-1)$, the multiplication by $p$ induces the map $\cG_i(\okbar) \to \cG[p^{n-1}]_{pi}(\okbar)$.
\end{lem}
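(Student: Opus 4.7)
The plan is to reduce to the connected case and carry out a direct estimate using the Frobenius–Verschiebung factorization of $[p]$ on the special fiber. First I would apply Lemma \ref{lowramdeg}(\ref{lrd-a}) to both $\cG$ and $\cG[p^{n-1}]$, noting that the formation of the identity component commutes with taking kernels so that $(\cG[p^{n-1}])^0=\cG^0[p^{n-1}]$; this lets us replace $\cG$ by its identity component and thus assume $\cG$ is connected. By \cite[Proposition 1.5]{Ti_HN}, we then have a presentation
\[
\cO_\cG\simeq \okey[[X_1,\ldots,X_d]]/(f_1,\ldots,f_d)
\]
with the identity section at the origin. Lift the ring homomorphism $[p]^*_\cG:\cO_\cG\to\cO_\cG$ to a continuous $\okey$-algebra endomorphism $F:\okey[[\underline{X}]]\to\okey[[\underline{X}]]$ given by $X_j\mapsto F_j(\underline{X})$. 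Then for any point $\underline{g}=(g_1,\ldots,g_d)\in\cG(\okbar)$, the coordinates of $pg$ are exactly $F_1(\underline{g}),\ldots,F_d(\underline{g})$, and $g\in\cG_i(\okbar)$ if and only if $v_p(g_j)\geq i$ for all $j$.

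The crucial structural input is that, in the category of commutative finite flat group schemes over a base of characteristic $p$, one has $[p]=V\circ F_{\cG/k}$ where $F_{\cG/k}$ denotes the relative Frobenius. Applied to $\cG\otimes_\okey k$, this says that $[p]^*(X_j)\bmod p$ lies in the image of the subring $k[[X_1^p,\ldots,X_d^p]]\hookrightarrow k[[\underline{X}]]\twoheadrightarrow\cO_{\cG\otimes k}$. Lifting to characteristic zero, one obtains a decomposition
\[
F_j(\underline{X})=G_j(\underline{X}^p)+pQ_j(\underline{X})+H_j(\underline{X})
\]
with $G_j\in\okey[[X_1^p,\ldots,X_d^p]]$, $Q_j\in\okey[[\underline{X}]]$, and $H_j\in(f_1,\ldots,f_d)\okey[[\underline{X}]]$; and since $f_l(0)=0$ and $F_j(0)=0$ (identity maps to identity), one can further arrange $G_j(0)=Q_j(0)=H_j(0)=0$.

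To conclude, suppose $g\in\cG_i(\okbar)$, so $v_p(g_j)\geq i$ for every $j$. Because $f_l(\underline{g})=0$ we have $H_j(\underline{g})=0$, hence
\[
v_p(F_j(\underline{g}))\geq \min\bigl(v_p(G_j(\underline{g}^p)),\ v_p(pQ_j(\underline{g}))\bigr)\geq \min(pi,\ 1+i)=pi,
\]
where the final equality uses the hypothesis $i\leq 1/(p-1)$. Thus $v_p((pg)_j)\geq pi$ for each $j$, which means $pg\in\cG_{pi}(\okbar)$. Since $pg\in\cG[p^{n-1}](\okbar)$ automatically (as $p^ng=0$), and the lower ramification of a closed subgroup is the intersection with the ambient ramification, we obtain $pg\in\cG[p^{n-1}]_{pi}(\okbar)$ as required.

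The main obstacle is bookkeeping rather than anything deep: one has to verify that the mod-$p$ decomposition lifts cleanly to integral $G_j,Q_j,H_j$ with zero constant terms. The key observation is that the reduction $\overline{G_j(0)}\in k$ vanishes because $\overline{H_j(0)}$ does (forced by $f_l(0)=0$), which allows us to choose a lift $G_j$ with $G_j(0)=0$; the remaining constraint $pQ_j(0)=0$ then gives $Q_j(0)=0$ by $p$-torsion-freeness of $\okey$. Once this setup is in place the remaining valuation estimate is elementary, and notably no Witt-vector analysis of $I_{n,i}$ is required.
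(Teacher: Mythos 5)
There is a genuine gap at the crucial decomposition step, though it is repairable. You apply the factorization $[p]=V\circ F$ to $\cG\otimes_{\okey}k$, i.e.\ you reduce modulo the maximal ideal $\mathfrak{m}_K$ of $\okey$, not modulo $p$. What that actually yields is $F_j(\underline{X})-G_j(\underline{X}^p)\in(\mathfrak{m}_K,f_1,\ldots,f_d)\okey[[\underline{X}]]$, so the middle term of your decomposition is only $\varpi Q_j(\underline{X})$ for a uniformizer $\varpi$ rather than $pQ_j(\underline{X})$. The valuation bound then degrades to $v_p(F_j(\underline{g}))\geq\min\bigl(pi,\ 1/e+i\bigr)$, and $1/e+i\geq pi$ fails as soon as $e>1$ (it would require $i\leq 1/(e(p-1))$, which is not the hypothesis). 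The fix is one line: $\okey/p\okey$ is itself a ring of characteristic $p$, so the Frobenius--Verschiebung factorization applies to $\cG\otimes_{\okey}\okey/p\okey$ and shows that $[p]^{*}(X_j)\bmod p$ lies in the image of $(\okey/p\okey)[[X_1^p,\ldots,X_d^p]]$; lifting coefficients to $\okey$ then gives exactly your decomposition $F_j=G_j(\underline{X}^p)+pQ_j+H_j$ with an honest factor of $p$. With that correction, the rest of your argument --- the constant-term bookkeeping, the vanishing $H_j(\underline{g})=0$, the estimate $\min(pi,1+i)=pi$ from $i\leq 1/(p-1)$, and the remark that the lower ramification of a closed subgroup is the intersection with the ambient one --- is correct.

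For comparison, the paper reduces to the connected case in the same way but then invokes Illusie's theorem to write $\cG^0\simeq\Gamma[p^n]$ for a $p$-divisible formal Lie group $\Gamma$ and quotes Rabinoff's normal form $[p](\mathbb{X})\equiv p\mathbb{X}+(X_1^p,\ldots,X_d^p)U+pf(\mathbb{X})\bmod\deg p^2$, after which the identical elementary estimate finishes the proof. Your route, once repaired, replaces Illusie plus Rabinoff by Tian's complete-intersection presentation plus $[p]=V\circ F$ over $\okey/p\okey$; it therefore never uses the truncated Barsotti--Tate structure and proves the statement for an arbitrary finite flat group scheme killed by a $p$-power, which is a genuine (if unneeded here) gain in generality.
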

\begin{proof}
By Lemma \ref{lowramdeg} (\ref{lowramdeg-deg}), we may assume that $\cG$ is connected. By \cite[Th\'{e}or\`{e}me 4.4 (e)]{Il}, there exists a $p$-divisible formal Lie group $\Gamma$ over $\okey$ such that $\cG$ is isomorphic to $\Gamma[p^n]$. By \cite[Lemma 11.3]{Ra}, we can choose formal parameters $X_1,\ldots,X_d$ of the formal Lie group $\Gamma$ such that the multiplication by $p$ of $\Gamma$ is written as
\[
[p](\mathbb{X})\equiv p\mathbb{X}+(X_1^p,\ldots, X_d^p)U+pf(\mathbb{X})\mod \deg p^2,
\]
where $\mathbb{X}=(X_1,\ldots,X_d)$, $f(\mathbb{X})=(f_1(\mathbb{X}),\ldots,f_d(\mathbb{X}))$ such that every $f_l$ contains no monomial of degree less than $p$ and $U \in M_d(\okey)$. Let $\underline{x}=(x_1,\ldots,x_d)$ be a $d$-tuple in $\okbar$ satisfying $[p^n](\underline{x})=0$ and $v_p(\underline{x})\geq i$. By assumption, we have the inequalities $1+v_p(\underline{x})\geq p i$ and $p v_p(\underline{x})\geq p i$. Hence $v_p([p](\underline{x}))\geq pi$ and the lemma follows.
\end{proof}

\begin{lem}\label{inclu}
We have the inclusion $\cG_{i'_n}\subseteq \cC_n$.
\end{lem}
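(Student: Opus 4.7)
The plan is to extend the Kisin-module argument of Lemma \ref{canlow1} from mod $p$ to level $n$. Theorem \ref{lowramcorrn} realizes $\cG_{i'_n}(\okbar)$ as the set of $\varphi$-compatible maps $\SGm = \SGm^*(\cG)\to I_{n,i'_n}$, and Lemmas \ref{I} and \ref{ipi} handle the Witt-vector valuation bookkeeping.

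First, I would lift the basis of $\SGm/p\SGm$ from Lemma \ref{canHdg1} to a basis $\tilde e_1,\ldots,\tilde e_h$ of $\SGm$ over $\SG_n = \SG/p^n\SG$, yielding a Frobenius matrix
\[
\tilde A = \begin{pmatrix}\tilde P_1 & \tilde P_2\\ u^e\tilde P_3 & u^e\tilde P_4\end{pmatrix}
\]
and a matrix $\tilde B\in M_{d,h-d}(\SG_n)$ lifting their mod-$p$ counterparts. By the level-$n$ canonical subgroup construction of \cite{Ha_cansubZ} together with the Kisin anti-equivalence, the sub-$\varphi$-module of $\SGm$ generated by the columns of $\begin{pmatrix}I_{h-d}\\ u^{e(1-w)}\tilde B\end{pmatrix}$ corresponds to the quotient $\cG/\cC_n$. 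Thus, writing $f(\tilde e_1,\ldots,\tilde e_h) = (\underline x,\underline y)\in I_{n,i'_n}^{h-d}\times I_{n,i'_n}^d$ for the $\varphi$-compatible map attached by Theorem \ref{lowramcorrn} to any $g\in\cG_{i'_n}(\okbar)$, the inclusion $g\in\cC_n(\okbar)$ becomes equivalent to the vanishing
\[
\zeta := \underline x + u^{e(1-w)}\underline y\,\tilde B = 0 \quad\text{in }W_n(R)^{h-d}.
\]

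To prove this vanishing I would argue by induction on $n$. Reducing the Frobenius equation $\varphi(\underline x,\underline y) = (\underline x,\underline y)\tilde A$ modulo $p$ recovers precisely the equation analyzed in Lemma \ref{canlow1}; its valuation estimate gives $v_R(\underline{\bar x}+u^{e(1-w)}\underline{\bar y}B) > w/(p-1)$, so Lemma \ref{canHdg1} forces $\zeta\equiv 0\pmod p$. Writing $\zeta = V\zeta'$ with $\zeta'\in W_{n-1}(R)^{h-d}$, the characterization of $I_{n,i'_n}$ in Lemma \ref{I} together with Lemma \ref{ipi} (whose shift $i\mapsto pi$ matches the identity $pi'_n = i'_{n-1}$) places $\zeta'$ in $I_{n-1,i'_{n-1}}^{h-d}$. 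Using the commutation $FV = VF$ and the Witt projection formula $V(a)\cdot b = V(a\cdot Fb)$, the equation for $\zeta$ descends to an analogous Frobenius recursion for $\zeta'$ over $W_{n-1}(R)$, which corresponds via Theorem \ref{lowramcorrn} to a lower-ramification condition on $\cG[p^{n-1}]$. Since $\cG[p^{n-1}]$ is a truncated Barsotti-Tate group of level $n-1$ with the same Hodge height $w < (p-1)/p^n < (p-1)/p^{n-1}$, the inductive hypothesis of Theorem \ref{canlow} applied to $\cG[p^{n-1}]$ yields $\zeta' = 0$, hence $\zeta = 0$. The base case $n=1$ is already contained in Lemma \ref{canlow1}.

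The main obstacle is the inductive matching in the second paragraph: one has to verify that the Witt-vector filtration coming from $I_{n,i'_n}$ transforms under Verschiebung into the filtration $I_{n-1,i'_{n-1}}$, and that the induced Frobenius recursion on $\zeta'$ genuinely corresponds to the Kisin-module equation defining lower ramification in $\cG[p^{n-1}]$. Once this alignment is in place, the induction on $n$ runs cleanly thanks to the key numerical identity $pi'_n = i'_{n-1}$.
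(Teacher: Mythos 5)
There is a genuine gap, and it lies at the heart of the induction. Your plan applies the inductive hypothesis of Theorem \ref{canlow} to $\cG[p^{n-1}]$, which (combined with Lemma \ref{plow}) only tells you that $pg\in\cC_{n-1}$ for $g\in\cG_{i'_n}(\okbar)$, i.e.\ $\cG_{i'_n}\subseteq p^{-1}\cC_{n-1}$. That is not enough: $p^{-1}\cC_{n-1}$ has order $p^{(n-1)d+h}$, strictly larger than $|\cC_n|=p^{nd}$ since $h>d$. The group that actually carries the induction is not $\cG[p^{n-1}]$ (Hodge height $w$) but the level $n-1$ truncated Barsotti--Tate group $p^{-(n-1)}\cC_1/\cC_1\subseteq\cG/\cC_1$, whose Hodge height is $pw$ and whose level $n-1$ canonical subgroup is $\cC_n/\cC_1$. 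To land $\cG_{i'_n}$ inside the $i'_{n-1}$-th lower ramification subgroup of that quotient one needs the key geometric input that $\cC_1\times\sS_{1-w}$ is the kernel of Frobenius of $\cG\times\sS_{1-w}$, so that the quotient map $\cG\to\cG/\cC_1$ multiplies the ramification index by $p$ (here $pi'_n=i'_{n-1}<1-w$ is exactly what is needed). Your Verschiebung manipulation $\zeta=V\zeta'$ does not substitute for this: the congruence $\zeta\equiv 0\pmod p$ obtained from the mod-$p$ reduction only reflects $p^{n-1}g\in\cC_1$, and the claim that the resulting recursion for $\zeta'$ ``corresponds to the Kisin-module equation defining lower ramification in $\cG[p^{n-1}]$'' is precisely where the argument would have to, but does not, produce the factor of $p$ in the index.

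A second, more structural problem is your starting point: you posit a basis of $\SGm$ over $\SG_n$ and a lift $\tilde B\in M_{d,h-d}(\SG_n)$ such that the sub-$\varphi$-module generated by the columns of $\bigl(\begin{smallmatrix}I_{h-d}\\ u^{e(1-w)}\tilde B\end{smallmatrix}\bigr)$ corresponds to $\cG/\cC_n$. No such explicit level-$n$ description of $\cC_n$ is established in this paper or in the cited construction; only the level-one subgroup $\cC_1$ admits the description via $B$ (Lemma \ref{canHdg1}), and $\cC_n$ is built inductively through the quotients $\cC_{j+1}/\cC_j$. Assuming such a lift exists is essentially assuming a strengthened form of the statement to be proved. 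The paper avoids both issues by working entirely on the group-scheme side: it first shows $\cG_{i'_n}\subseteq p^{-(n-1)}\cC_1$ via Lemmas \ref{canlow1} and \ref{plow}, then uses the Frobenius-kernel property of $\cC_1$ to push $\cG_{i'_n}$ into $(p^{-(n-1)}\cC_1/\cC_1)_{i'_{n-1}}$, and finally invokes the induction hypothesis for that level $n-1$ group.
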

\begin{proof}
By Lemma \ref{canlow1} and Lemma \ref{plow}, the multiplication by $p^{n-1}$ induces a homomorphism $\cG_{i'_n}(\okbar) \to \cG[p]_{i'_1}(\okbar)=\cC_1(\okbar)$. Hence we have the inclusion $\cG_{i'_n}\subseteq p^{-(n-1)}\cC_1$. Consider the natural map $\cG\to \cG/\cC_1$. By \cite[Theorem 1.1]{Ha_cansubZ}, the subgroup scheme $\cC_1\times \sS_{1-w}$ coincides with the kernel of the Frobenius of $\cG\times \sS_{1-w}$. Put $\bar{\cG}=\cG\times \sS_{1-w}$ and similarly for $\overline{\cG/\cC_1}$. Note the inequality $pi'_{n}=i'_{n-1}<1-w$. Then we have a commutative diagram
\[
\xymatrix{
\cG(\okbar)\ar[r]\ar[d] & (\cG/\cC_1)(\okbar)\ar[d] & \\
\bar{\cG}(\cO_{\Kbar,1-w}) \ar[r] & \overline{\cG/\cC_1}(\cO_{\Kbar,1-w}) \ar@{^{(}->}[r]\ar[d] &\bar{\cG}^{(p)}(\cO_{\Kbar,1-w})\ar[d]\\
 & \overline{\cG/\cC_1}(\cO_{\Kbar,p i'_n}) \ar@{^{(}->}[r] &\bar{\cG}^{(p)}(\cO_{\Kbar,p i'_n}),
}
\]
where the composite of the middle row is the Frobenius map and the right horizontal arrows are injections. From this diagram, we see that the map $\cG\to \cG/\cC_1$ induces a map 
\[
\cG_{i'_n}(\okbar)\to (\cG/\cC_1)_{i'_{n-1}}(\okbar).
\]
This implies the inclusion $\cG_{i'_n}/\cC_1\subseteq (p^{-(n-1)}\cC_1/\cC_1)_{i'_{n-1}}$. Note that the group scheme $p^{-(n-1)}\cC_1/\cC_1$ is a truncated Barsotti-Tate group of level $n-1$, height $h$ and dimension $d$ with Hodge height $pw$ and that the subgroup scheme $\cC_n/\cC_1$ is its level $n-1$ canonical subgroup (see the proof of \cite[Theorem 1.1]{Ha_cansub} and \cite[Theorem 1.1]{Ha_cansubZ}). From the induction hypothesis, we see that the equality
\[
(p^{-(n-1)}\cC_1/\cC_1)_{i'_{n-1}}=\cC_n/\cC_1
\]
holds. This implies the inclusion $\cG_{i'_n}\subseteq \cC_n$ and the lemma follows.
\end{proof}

\begin{prop}\label{surj}
The image of the map $\cG_{i_n}(\okbar) \to \cG[p^{n-1}]_{p i_{n}}(\okbar)$ induced by the multiplication by $p$ contains the subgroup $\cG[p^{n-1}]_{i_{n-1}}(\okbar)$.
\end{prop}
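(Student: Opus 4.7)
The plan is to construct a preimage of $x$ explicitly on the Breuil--Kisin side, using Theorem \ref{lowramcorrn} to identify lower ramification with membership in the ideal $I_{n,i}$. Let $\SGm = \SGm^*(\cG)$. By the induction hypothesis we have $\cG[p^{n-1}]_{i_{n-1}}(\okbar) = \cC_{n-1}(\okbar)$, so $x$ corresponds under $\varepsilon_{\cG[p^{n-1}]}$ to a $\varphi$-equivariant $\SG$-linear map $f : \SGm^*(\cG[p^{n-1}]) \to I_{n-1,i_{n-1}}$. Choose an arbitrary preimage $y_0 \in \cG(\okbar)$ of $x$ under multiplication by $p$, and write $\varepsilon_\cG(y_0)(m) = (r_0(m),\ldots,r_{n-1}(m))$ in Witt components. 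Since multiplication by $p$ on $W_n(R)$ equals $VF$, the equality $py_0 = x$ translates, after identifying $VW_n(R) \simeq W_{n-1}(R)$, to $(r_0(m)^p,\ldots,r_{n-2}(m)^p) = f(m)$.

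Next, I would combine the input on $f$ with Lemmas \ref{I} and \ref{ipi} to propagate the bounds to $W_n(R)$. Since $R$ is perfect, the values $r_j(m) = f_j(m)^{1/p}$ for $0 \le j \le n-2$ are determined by $x$ and inherit valuation bounds via Lemma \ref{I}. Using the identity $i_{n-1} = p\,i_n + w$ together with the strict inequality $w < (p-1)/p^n$, I would check that the first $n-1$ components $(r_0(m),\ldots,r_{n-2}(m))$ satisfy the divisibility conditions of Lemma \ref{I} required for membership in $I_{n-1,pi_n}$. By Lemma \ref{ipi}, it then suffices to arrange $v_R(r_{n-1}(m)) \geq i_n p^{n-1}$ on a generating set of $\SGm$ in order to conclude that the full Witt vector lies in $I_{n,i_n}$, hence $y_0 \in \cG_{i_n}(\okbar)$ by Theorem \ref{lowramcorrn}.

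The main obstacle is realizing this last bound on $r_{n-1}$. The lift $y_0$ is unique up to addition of an element $z \in \cG[p](\okbar)$, and because $R$ has characteristic $p$ the $p$-torsion of $W_n(R)$ is exactly $V^{n-1}W_n(R) \simeq R$; hence such $z$ correspond under $\varepsilon_\cG$ to $\varphi$-equivariant maps $h \in \Hom_{\SG,\varphi}(\SGm/p\SGm, R)$ and shift $r_{n-1}(m) \mapsto r_{n-1}(m) + h(m)$ without affecting the other components. It therefore suffices to find $h \in \cG[p](\okbar)$ approximating $-r_{n-1}$ modulo $\up^{i_n p^{n-1}}$. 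Here I would exploit the explicit description of $\SGm/p\SGm$ recalled before Lemma \ref{canHdg1}, together with the $\varphi$-equivariance equation $r_{n-1}(\varphi(m)) = r_{n-1}(m)^p$, to show that the defect of $r_{n-1}$ modulo $\up^{i_n p^{n-1}}$ is representable by an element of $\cC_1(\okbar) \subseteq \cG[p](\okbar)$ in the sense of Lemma \ref{canHdg1}; this is precisely where the assumption $w < (p-1)/p^n$ will enter to ensure a valid correction $h$ exists. The resulting $y = y_0 + z$ then lies in $\cG_{i_n}(\okbar)$ and satisfies $py = x$, completing the proposition.
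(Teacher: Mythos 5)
Your overall strategy is the same as the paper's: use Theorem \ref{lowramcorrn} to translate both groups into $\varphi$-equivariant maps valued in the ideals $I_{n,i_n}$ and $I_{n-1,\cdot}$, observe that multiplication by $p$ only sees the first $n-1$ Witt components, and then repair the last component using the $\cG[p](\okbar)$-ambiguity of the lift, concluding via Lemma \ref{ipi}. However, there are two problems. First, a normalization error: you write $(r_0(m)^p,\ldots,r_{n-2}(m)^p)=f(m)$, i.e.\ you insert a Frobenius when passing from $W_n(R)$ to $W_{n-1}(R)$. The identification the paper uses (and which one checks on ghost components, since $p^{-n}W(R)/W(R)\xrightarrow{\times p}p^{-(n-1)}W(R)/W(R)$ becomes the reduction $W_n(R)\to W_{n-1}(R)$) is the plain projection dropping the last component, with no Frobenius. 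This matters: under your convention $r_j=f_j^{1/p}$, so $f\in I_{n-1,i_{n-1}}$ only gives $(r_0,\ldots,r_{n-2})\in I_{n-1,i_{n-1}/p}$, and $i_{n-1}/p=i_n+w/p$ is in general strictly smaller than $pi_n$ (e.g.\ for $w$ near $0$), so the membership in $I_{n-1,pi_n}$ you need for Lemma \ref{ipi} would \emph{not} follow. With the correct convention the step is immediate from $i_{n-1}=pi_n+w\geq pi_n$, so the error is repairable, but as written the intermediate claim fails.

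The more serious gap is that the crux of the proposition --- the existence of $h\in\Hom_{\SG,\varphi}(\SGm/p\SGm,R)$ with $v_R(r_{n-1}(m)+h(m))\geq i_np^{n-1}$ on a basis --- is only announced, not proved. Note also that the constraint satisfied by $r_{n-1}$ is not $r_{n-1}(\varphi(m))=r_{n-1}(m)^p$ in any directly usable form: expanding $g(\varphi(e_j))=\sum_k a_{kj}g(e_k)$ in Witt coordinates produces carry terms from $r_0,\ldots,r_{n-2}$, so $r_{n-1}$ satisfies an \emph{inhomogeneous} Artin--Schreier-type system. Proving solvability of the congruence is exactly where the paper spends its effort: after passing to the adapted basis $\bar e'_1,\ldots,\bar e'_{h-d},\bar e_{h-d+1},\ldots,\bar e_h$, the correction splits into an $(h-d)$-block solved by a convergent recursion against $\hat D$ (convergence needs $p(p-1)i_n>w$, which is where $w<(p-1)/p^n$ enters) and a $d$-block given by an Artin--Schreier system of rank $p^d$, solvable because $\Frac(R)$ is algebraically closed and $R$ is integrally closed. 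Your proposed formulation --- that the defect of $r_{n-1}$ is ``representable by an element of $\cC_1(\okbar)$'' --- is not justified and does not look correct as stated: in the explicit solution the correction has a nontrivial component in the complementary ($\underline{\omega}$) directions, not only in the canonical ones, and $\cC_1(\okbar)$ is a finite group of order $p^d$ whereas the target congruence classes form a much larger set. Until this existence statement is established, the proof is incomplete.
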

\begin{proof}
By Theorem \ref{lowramcorrn} and Lemma \ref{plow}, we have a commutative diagram
\[
\xymatrix{
\cG_{i_n}(\okbar) \ar[r]_-{\sim}\ar[d]_{\times p} & \Hom_{\SG,\varphi}(\SGm, I_{n,i_n}) \ar[d]_{\prjt} \\
\cG[p^{n-1}]_{p i_n}(\okbar)\ar[r]_-{\sim} & \Hom_{\SG,\varphi}(\SGm, I_{n-1, p i_{n}})\\
\cG[p^{n-1}]_{i_{n-1}}(\okbar)\ar@{^{(}->}[u] \ar[r]_-{\sim}& \Hom_{\SG,\varphi}(\SGm, I_{n-1, i_{n-1}}),\ar@{^{(}->}[u]
}
\]
where the horizontal arrows are isomorphisms and the map $\prjt$ is induced by the natural projection $W_n(R)\to W_{n-1}(R)$. It suffices to show that the image of the map $\prjt$ contains the subgroup $\Hom_{\SG,\varphi}(\SGm, I_{n-1, i_{n-1}})$. 

Let $e_1,\ldots,e_h$ be a basis of the $\SG_n$-module $\SGm$ lifting $\bar{e}_1,\ldots,\bar{e}_h$ and $e'_1,\ldots,e'_{h-d}$ be lifts of $\bar{e}'_1,\ldots,\bar{e}'_{h-d}$ in $\SGm$, respectively. Then $e'_1,\ldots,e'_{h-d},e_{h-d+1},\ldots, e_h$ also form a basis of the $\SG_n$-module $\SGm$. Take a $\varphi$-compatible homomorphism of $\SG$-modules $\SGm \to I_{n-1, i_{n-1}}$ defined by
\[
(e'_1,\ldots,e'_{h-d},e_{h-d+1},\ldots,e_h)\mapsto (\underline{x},\underline{y}),
\]
where $\underline{x}=(x_1,\ldots, x_{h-d})$ and $\underline{y}$ are an $(h-d)$-tuple and a $d$-tuple in the ideal $I_{n-1, i_{n-1}}$, respectively. Put $\hat{x}_l=(x_l,0)\in W_n(R)$, $\hat{\underline{x}}=(\hat{x}_1,\ldots,\hat{x}_{h-d})$ and similarly for $\hat{\underline{y}}$. Let $A$ be the matrix in $M_h(\SG_n)$ satisfying
\[
\varphi(e'_1,\ldots,e'_{h-d},e_{h-d+1},\ldots,e_h)=(e'_1,\ldots,e'_{h-d},e_{h-d+1},\ldots,e_h)A.
\]
Define an $(h-d)$-tuple $\underline{\xi}=(\xi_1,\ldots,\xi_{h-d})$ and a $d$-tuple $\underline{\eta}$ in $R$ by
\[
p^{n-1}([\underline{\xi}], [\underline{\eta}])=\varphi(\hat{\underline{x}}, \hat{\underline{y}})-(\hat{\underline{x}}, \hat{\underline{y}})A,
\]
where we put $[\underline{\xi}]=([\xi_1],\ldots,[\xi_{h-d}])$ and similarly for $[\underline{\eta}]$.
By Proposition \ref{Y}, the elements $\hat{x}$ and $\hat{y}$ are divisible by $[\up^{i_{n-1}}]$ and thus we can write
\[
(\underline{\xi}, \underline{\eta})=\up^{i_{n-1}} (\underline{\xi}', \underline{\eta}').
\]
Since $i_{n-1}=p i_n+w\geq p i_n$, Lemma \ref{ipi} implies that, for any $h$-tuple $\underline{z}$ in $R$, the element $(\hat{\underline{x}},\hat{\underline{y}})+p^{n-1}[\up^{i_n}\underline{z}]$ is contained in the ideal $I_{n,i_n}$. It is enough to show that there exists an $h$-tuple $\underline{z}$ in $R$ satisfying
\[
\varphi((\hat{\underline{x}},\hat{\underline{y}})+p^{n-1}[\up^{i_n}\underline{z}])=((\hat{\underline{x}},\hat{\underline{y}})+p^{n-1}[\up^{i_n}\underline{z}])A.
\]
Put $\underline{z}=(\underline{\zeta}, \underline{\omega})$ with an $(h-d)$-tuple $\underline{\zeta}$ and a $d$-tuple $\underline{\omega}$. Then this is equivalent to the equation
\[
(\underline{\xi}, \underline{\eta})+ \up^{p i_n}(\underline{\zeta}^p, \underline{\omega}^p)=\up^{i_n}(\underline{\zeta}, \underline{\omega})\begin{pmatrix}D & P_2\\ 0 & u^{e(1-w)} P'_4 \end{pmatrix}.
\]
We claim that the equation for the first entry
\[
\underline{\xi}+\up^{p i_n}\underline{\zeta}^p=\up^{i_n}\underline{\zeta}D
\]
has a solution $\underline{\zeta}=\up^{(p-1) i_n}\underline{\zeta}'$ with an $(h-d)$-tuple $\underline{\zeta}'$ in $R$. Indeed, let $\hat{D}\in M_{h-d}(k[[u]])$ be the matrix satisfying $D\hat{D}=u^{ew}I_{h-d}$. Then this is equivalent to the equation
\[
\underline{\zeta}'=\underline{\xi}'\hat{D}+\up^{p(p-1) i_n-w}(\underline{\zeta}')^p\hat{D}.
\]
Since $p(p-1)i_n>w$, we can find a solution $\underline{\zeta}'$ of the equation by recursion. For the second entry, we have the equation
\[
\up^{p i_n+w}\underline{\eta}'+\up^{p i_n}\underline{\omega}^p=\up^{i_n}(\underline{\zeta}P_2+\up^{1-w} \underline{\omega} P'_4).
\]
This is equivalent to the equation
\[
\underline{\omega}^p=\up^{1-w-(p-1)i_n}\underline{\omega} P'_4+\underline{\zeta}' P_2-\up^w \underline{\eta}'.
\]
Note the inequality $1-w \geq (p-1)i_n$. Write this equation as
\[
(\omega^p_1,\ldots,\omega^p_d)+(\omega_1,\ldots,\omega_d)C+(c'_1,\ldots,c'_d)=0
\]
with some $C=(c_{i,j})\in M_d(R)$ and $c'_i\in R$. Then the $R$-algebra
\[
R[\omega_1,\ldots,\omega_d]/(\omega^p_1+\sum_{j=1}^d c_{j,1}\omega_j+c'_1,\ldots,\omega^p_d+\sum_{j=1}^d c_{j,d}\omega_j+c'_d)
\]
is free of rank $p^d$ over $R$. Since $\Frac(R)$ is algebraically closed and $R$ is integrally closed, this $R$-algebra admits at least one $R$-valued point. Hence we can find at least one solution $\underline{\omega}$ of the equation. This concludes the proof of the proposition.
\end{proof}

Consider the exact sequence
\[
0 \to \cG[p]_{i_n}(\okbar) \to \cG_{i_n}(\okbar)  \overset{\times p}{\to} \cG[p^{n-1}]_{p i_n}(\okbar).
\]
Proposition \ref{surj} implies that the image of the rightmost arrow contains the subgroup 
\[
\cG[p^{n-1}]_{i_{n-1}}(\okbar)\subseteq \cG[p^{n-1}]_{p i_n}(\okbar),
\]
which coincides with $\cC_{n-1}(\okbar)$ by induction hypothesis and thus is of order $p^{(n-1)d}$. By Lemma \ref{canlow1}, the subgroup $\cG[p]_{i_n}(\okbar)$ also coincides with $\cC_1(\okbar)$ and this is of order $p^d$. Hence the group $\cG_{i_n}(\okbar)$ is of order no less than $p^{nd}$. Since Lemma \ref{inclu} implies the inclusions
\[
\cG_{i_n}(\okbar) \subseteq \cG_{i'_n}(\okbar)\subseteq \cC_n(\okbar),
\]
Theorem \ref{canlow} follows by comparing orders. \qed

To prove Theorem \ref{familycan}, we need the following lemma, which is a ``lower'' variant of \cite[Lemma 4.5]{Ha_cansub}.

\begin{lem}\label{familylow}
Let $K/\bQ_p$ be an extension of complete discrete valuation fields and $i$ be a positive rational number. Let $\frX$ be an admissible formal scheme over $\Spf(\okey)$ and $X$ be its Raynaud generic fiber. Let $\frG$ be a finite locally free formal group scheme over $\frX$ with Raynaud generic fiber $G$.
Then there exists an admissible open subgroup $G_i$ of $G$ over $X$ such that the open immersion $G_i\to G$ is quasi-compact and that for any finite extension $L/K$ and $x\in X(L)$, the fiber $(G_i)_x$ coincides with the lower ramification subgroup $(\frG_{x})_i\times \Spec(L)$ of the finite flat group scheme $\frG_{x}=\frG\times_{\frX,x}\Spf(\oel)$ over $\oel$. 
\end{lem}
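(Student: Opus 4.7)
The plan is to construct $G_i$ locally on $\frX$ via a Weierstrass-type condition on the augmentation ideal of $\frG$, and then glue. First I will cover $\frX$ by affine formal opens $\Spf(A)\subseteq \frX$ over each of which $\frG$ is represented by $\Spf(B)$ with $B$ finite locally free over $A$; such a cover exists because $\frG\to\frX$ is finite. Since $A$ is Noetherian, the augmentation ideal $J=\Ker(B\to A)$ defined by the zero section is finitely generated. Writing $i=a/b$ with positive integers $a,b$ and choosing a finite set of generators $j_1,\ldots,j_s$ of $J$, I will define $(G_i)|_U$ for $U=\Spv(A_K)$ to be the Weierstrass subdomain of $G|_U=\Spv(B_K)$ cut out by the inequalities $|j_l|^b\leq |p|^a$ for $l=1,\ldots,s$; this is admissible open, and its inclusion into $G|_U$ is quasi-compact.

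The independence of this subdomain from the choice of generators is immediate, because two generating sets of $J$ are related by matrices with entries in $B$, which have sup-norm at most $1$ on the affinoid $G|_U$; independence from the affine chart follows because the augmentation ideal is intrinsic to the pair $(\frG,\frX)$ and restricts correctly to smaller affine opens. Hence the local definitions glue to an admissible open subspace $G_i\subseteq G$ over $X$ whose immersion is quasi-compact.

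For the fiberwise description, let $x\in X(L)$ correspond to $A\to \oel$. Then $\frG_x=\Spec(B\otimes_A\oel)$ has augmentation ideal $J\otimes_A\oel$, and an $\okbar$-point $g$ of $\frG_x$ lies in the lower ramification subgroup $(\frG_x)_i(\okbar)$ if and only if the corresponding $\oel$-algebra map $g^\ast$ is congruent to the augmentation modulo $m_{\okbar}^{\geq i}$, equivalently $v_p(g^\ast(j_l))\geq i$ for every generator $j_l$; unwinding the definition of the Weierstrass subdomain shows this is precisely the condition for $g$ to lie in the fiber $(G_i)_x$, which gives the stated identification with $(\frG_x)_i\times \Spec(L)$.

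It remains to verify that the group law of $G$ restricts to $G_i$. On an affine chart the comultiplication $\mu^\ast\colon B\to B\otimes_A B$ carries $J$ into $J\otimes B + B\otimes J$; for $j\in J$ we may thus write $\mu^\ast(j)=j\otimes 1+1\otimes j+\sum_k a_k\otimes b_k$ with $a_k,b_k\in J$, and evaluating at $(x,y)\in G_i\times_X G_i$ the strong triangle inequality gives
\begin{equation*}
|\mu^\ast(j)(x,y)|\leq \max\bigl(|j(x)|,\,|j(y)|,\,\max_k |a_k(x)|\cdot|b_k(y)|\bigr)\leq |p|^{a/b},
\end{equation*}
so multiplication factors through $G_i$; analogous arguments handle inversion and the identity section. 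The main obstacle is the organizational one of checking that the local construction is independent of choices and that the group operations preserve $G_i$, but the intrinsic nature of the augmentation ideal together with the non-archimedean triangle inequality dispatches both points cleanly.
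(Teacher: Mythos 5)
Your proof is correct, but it takes a genuinely different route from the paper's. The paper works on the formal side: it forms the admissible blow-up of $\frG$ along $\sJ=p^m\cO_{\frG}+\sI^n$ (where $i=m/n$ and $\sI$ is the augmentation ideal), takes the open locus where $p^m$ generates $\sJ$, and defines $G_i$ as the Raynaud generic fiber of this dilatation; the group structure then comes for free from the universal property of dilatations, as in Abbes--Mokrane, and quasi-compactness is read off from the local description $U\cap G_i=\Spv(A\langle I^n/p^m\rangle)$. You instead build $G_i$ directly as a rigid-analytic subdomain cut out by $|j_l|^b\leq|p|^a$ on generators of the augmentation ideal, and verify the group law by hand from the Hopf-algebra identity $\mu^*(j)\equiv j\otimes 1+1\otimes j \bmod J\otimes J$ together with the ultrametric inequality (using $2i\geq i$). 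Both constructions produce the same locus $\{x\mid v_p(\sI(x))\geq i\}$, so the fiberwise identification with $(\frG_x)_i$ is the same computation in either approach. What the paper's formal-model argument buys is that admissibility of $G_i$ and the subgroup property are automatic from general theory; what your argument buys is that it is self-contained and avoids citing the theory of dilatations, at the cost of having to check by hand the points you do check --- that the local Weierstrass domains are independent of the chosen generators (your sup-norm $\leq 1$ observation for elements of $B$ is exactly the right reason), that they glue to an admissible open (harmless here since each chart intersection is a single affinoid subdomain of an affinoid chart in an admissible cover), and that multiplication, inversion and the unit section preserve the subdomain. These verifications are all carried out correctly, so I see no gap.
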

\begin{proof}
Let $\sI$ be the augmentation ideal sheaf of the formal group scheme $\frG$. Write $i=m/n$ with positive integers $m,n$ and put $\sJ=p^m\cO_{\frG}+\sI^n$. Let $\frB$ be the admissible blow-up of $\frG$ along the ideal $\sJ$ and $\frG_{m,n}$ be the formal open subscheme of $\frB$ where $p^m$ generates the ideal $\sJ\cO_\frB$. Since the Raynaud generic fiber of $\frG_{m,n}$ is the admissible open subset of $G$ whose set of $\Kbar$-valued points is given by
\[
\{x\in G(\Kbar)| v_p(\sI(x))\geq i\},
\]
it is independent of the choice of $m,n$ and we write it as $G_i$. Using the universality of dilatations as in the proof of \cite[Proposition 8.2.2]{AM}, we can show that $G_i$ is an admissible open subgroup of the rigid-analytic group $G$. For any affinoid open subset $U=\Spv(A)$ of $G$, put $I=\Gamma(U,\sI)$. Then the intersection $U\cap G_i$ is the affinoid $\Spv(A\langle I^n/p^m\rangle)$ and thus the open immersion $G_i\to G$ is quasi-compact. This concludes the proof of the lemma.
\end{proof}

\begin{proof}[Proof of Theorem \ref{familycan}]
Set $C_n$ to be the admissible open subgroup $G_{i'_n}$ of $G$ as in Lemma \ref{familylow} with $i'_n=1/(p^n(p-1))$. Then, by this lemma and Theorem \ref{canlow}, each fiber $(C_n)_x$ coincides with the generic fiber of the level $n$ canonical subgroup of $\frG_x$ and its group of $\Kbar$-valued points is isomorphic to the group $(\bZ/p^n\bZ)^d$. Moreover, $C_n$ is etale, quasi-compact and separated over $X(r_n)$. Thus \cite[Theorem A.1.2]{Co} implies that $C_n$ is finite over $X(r_n)$ and the theorem follows by a similar argument to the proof of  \cite[Corollary 1.2]{Ha_cansub}.
\end{proof}

%---------------------------------------------------------------------

%---------------------------------------------------------------------

\end{document}